\def\ZZ         {{\mathbb Z}}
\def\RR         {{\mathbb R}}
\def\CC         {{\mathbb C}}
\def\QQ         {{\mathbb Q}}
\def\PP         {{\mathbb P}}
\def\A          {{\cal A}}
\def\E          {{\cal E}}
\def\X          {{\cal X}}
\def\I        {{\cal I}}
\def\N          {{\cal N}}    
\def\O          {{\cal O}}
\def\deg        {{\rm deg}}
\def\cal        {\mathcal}
\newtheorem{theorem}{Theorem}[section]
\newtheorem{lemma}[theorem]{Lemma}
\newtheorem{prop}[theorem]{Proposition}
\newtheorem{corollary}[theorem]{Corollary}
\theoremstyle{definition}
\newtheorem{dfn}[theorem]{Definition}
\newtheorem{example}[theorem]{Example}
\theoremstyle{remark}
\newtheorem{remark}[theorem]{Remark}
\begin{document}

\title[Mordell-Weil groups of isotrivial abelian varieties]
{On Mordell-Weil groups of isotrivial abelian 
varieties over function fields}

\author{Anatoly Libgober}

\address{
Department of Mathematics\\
University of Illinois\\
Chicago, IL 60607}
\email{libgober@math.uic.edu}

\footnote{Supportd by  grant from Simons Foundation}

\begin{abstract} 
We show that the Mordell-Weil rank 
of an isotrivial abelian variety 
with cyclic holonomy  
depends only on the fundamental group 
of the complement to the discriminant,
provided the discriminant has 
singularities in CM class introduced here. 
This class of singularities 
includes all unibranched plane 
curves singularities. As a corollary, 
we describe a family of simple Jacobians over the field 
of rational functions in two variables 
for which the Mordell Weil 
rank is arbitrarily large.
\end{abstract}

\maketitle

\section{Introduction}

Let $\A$ be an abelian variety over a function field $K$
of characteristic zero.
The group of $K$-points of $\A$ is an interesting 
{\it algebro-geometric} 
invariant. If $dim_K \A=1$, $deg.tr.K/\CC=n$,  
then it is closely related to the Neron-Severi group 
of the corresponding elliptic $(n+1)$-fold (cf. \cite{survey}, \cite{wazir}).
In this note we consider a class of abelian varieties 
$\A$ over the function field $K=\CC(x,y)$
for which the Mordell-Weil rank can be described
in {\it topological} terms. This description extends
the results of \cite{jose} where the case of elliptic 
curves over $K=\CC(x,y)$ was studied in detail.

We shall work with a non-singular projective model
of $\A$, i.e. assume that $\A$ is a smooth projective variety 
together with a flat morphism 
\begin{equation}\label{morphismoffamily}
\pi: \A \rightarrow \PP^2
\end{equation}
such that fibers over closed points in the Zariski 
open subset of $X=\PP^2$ are polarized abelian varieties over $\CC$.
Our main results relate the Mordell-Weil rank of $\A$ 
to the fundamental group of the complement to the discriminant 
$\Delta$ of (\ref{morphismoffamily}).

The restrictions which we impose on 
the abelian variety $\A$, allowing one to express the Mordell-Weil rank 
topologically, are the following:

\noindent A1. $\A$ is isotrivial. 

\noindent A2. The holonomy group of the family (\ref{morphismoffamily}), 
(cf. section \ref{holonomysubsection}) is cyclic.

\noindent A3. The singularities of the discriminant have CM type 
(cf. section \ref{cmsingsection}).

\bigskip

In the case of elliptic {\it surfaces} (i.e.
$dim_K \A=1$, $deg.tr.K/\CC=1$) satisfying the condition A1,
the condition A2 is automatically fulfilled. However, the Mordell-Weil 
rank is far from 
being topological as the examples in \cite{factors} show.
Besides giving bounds on the Mordell-Weil rank (cf. Theorem \ref{main}),  
we also present several classes of families $\A$ 
for which the rank can be calculated explicitly. 
In addition to the conditions A1-A3 above,
we limit our-selves to the case of abelian varieties for which 
the discriminant is irreducible. We imposed this condition to simplify 
the exposition. 
 


The data which affects the Mordell-Weil rank of $\A$ 
in fact requires only a small portion of the fundamental group
of the complement to $\Delta$. It is 
the same as the data controling the Betti numbers of cyclic
branched covers of $\PP^2$ with the ramification locus
coinciding with the discriminant of morphism (\ref{morphismoffamily}).
As was shown in \cite{duke}, Betti numbers of cyclic branched 
covers can be expressed in terms of the quotient $\pi_1/\pi_1''$ 
of the fundamental group $\pi=\pi_1(\PP^2-\Delta)$ 
by its second commutator $\pi_1''$. It is convenient 
to express them   
in terms of the Alexander invariant of $\Delta \subset \PP^2$
i.e.  the vector space $\pi_1'/\pi_1'' \otimes \CC$ 
considered as a module over the group ring $\CC[\pi_1/\pi_1']$
of the abelianization of $\pi_1$ and 
ultimately this Alexander invariant represents the topological data which 
controls Mordell Weil ranks of abelian varieties (\ref{morphismoffamily}).

Results of this note show that for abelian varieties considered below, 
the Mordell-Weil rank depends, besides the type of the generic fiber,  
the degree and the local type of singularities of the discriminant, 
on the dimensions of certain linear systems 
of curves determined by the local type of singularities 
of the latter. This is a consequence of known results 
showing that the Alexander invariants of plane singular curves
depend only on this data (cf. \cite{lectures} and section \ref{sectionalexander}
below and references therein). 
Recently in \cite{remke}, a relation 
was obtained between the rank of the elliptic curves
and the dimensions of such linear systems 
in the case when $\A$ has the discriminant {\it with  
cusps and nodes as the only singularities}, 
using methods different than those used in this 
paper (i.e. studying the syzygies of the locus of cusps 
of the discriminant).

One of the key ingredients in the proof of above mentioned results, 
having independent interest, is the decomposition 
theorem of the Albanese varieties of cyclic 
branched covers of $\PP^2$. In the context of abelian 
varieties (\ref{morphismoffamily}) these covers come
up since the abelian varieties satisfying above conditions A1 and A2 
 become trivial over cyclic extensions
of $\CC(x,y)$.
We show that the Albanese variety of the cyclic cover of $\PP^2$, 
over which the pull back of $\A$ is trivial
splits up to isogeny into a product of abelian varieties of CM type, 
assuming that  
the singularities of the branching locus of the 
cyclic cover of $\PP^2$ have CM type. 
More precisely, we have the following (similar 
result was obtained in \cite{jose} in the case when 
$\A$ is an elliptic curve but with slightly different
assumptions on singularities):
\begin{theorem}\label{mainresultintro} 
Let $\Delta$ be an irreducible and reduced curve in $\PP^2$  
such that all its singularities have CM type.
Then the Albanese variety of a
cyclic cover of $\PP^2$ ramified along $\Delta$ is isogenous to 
a product of abelian varieties of CM type each having as its 
 endomorphism algebra an etale algebra which is 
a product of cyclotomic fields.
\end{theorem}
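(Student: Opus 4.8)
\emph{Overview and reduction to $H^1$.} The plan is to pass from the Albanese variety of the cover to the weight-one Hodge structure $H^1$ of a resolution, to split this Hodge structure under the deck group, and to recognize the pieces as Hodge structures of CM abelian varieties attached to the singularities of $\Delta$; the hypothesis on the singularities is exactly what forces these local pieces to be of CM type with cyclotomic endomorphisms. Fix $N$ dividing $\deg\Delta$, let $X_N\to\PP^2$ be the $N$-fold cyclic cover branched along $\Delta$, and choose a resolution $\widetilde X_N\to X_N$; since $\Delta$ is irreducible, $H_1(\PP^2\setminus\Delta)=\ZZ/\deg\Delta$ is cyclic, the deck group $G\cong\ZZ/N$ acts on $\widetilde X_N$, and hence on $A:={\rm Alb}(\widetilde X_N)$. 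Up to isogeny $A$ is determined by the polarizable weight-one Hodge structure $H:=H^1(\widetilde X_N,\QQ)$, it is isogenous to a product of CM abelian varieties precisely when $H$ is a CM Hodge structure, and the endomorphism algebras of the simple factors are the endomorphism fields of the simple sub-Hodge structures of $H$; so it suffices to prove that $H$ is a CM Hodge structure all of whose simple summands have cyclotomic endomorphism field.

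\emph{The isotypical decomposition.} The $G$-action makes $H$ a module over $\QQ[G]\cong\prod_{e\mid N}\QQ(\zeta_e)$, and the idempotents of $\QQ[G]$ act as morphisms of Hodge structures, so $H=\bigoplus_{e\mid N}H[e]$ with each $H[e]$ a sub-Hodge structure carrying a $\QQ(\zeta_e)$-action. Over $\CC$, $H[e]\otimes\CC$ is the sum of the eigenspaces $H_\zeta=H^{1,0}_\zeta\oplus H^{0,1}_\zeta$ over primitive $e$-th roots of unity $\zeta$, with $H^{0,1}_\zeta=\overline{H^{1,0}_{\bar\zeta}}$. Since polarizable weight-one Hodge structures are semisimple, it is enough to prove that each $H[e]$ is of CM type with cyclotomic endomorphism fields; a direct sum of such objects is the first cohomology of a product of CM abelian varieties whose endomorphism algebra is an \'etale algebra that is a product of cyclotomic fields, which is the form asserted in the theorem.

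\emph{Localization at ${\rm Sing}\,\Delta$.} The key step is to express $H[e]$ through local data at the singularities. Using the theory of Alexander invariants of plane curves (section \ref{sectionalexander}, and \cite{duke}, \cite{lectures}), the eigenspace dimensions $\dim H^{1,0}_\zeta$, $\zeta=\exp(2\pi\ii k/N)$, equal superabundances of linear systems of curves adjoint to $\Delta$ whose conditions are imposed locally at the points of ${\rm Sing}\,\Delta$ via the ideals of quasiadjunction; refining this, $H[e]$ is built functorially from the order-$e$ summands of the vanishing (Milnor fiber) cohomology at these points. Concretely one wants an identification of $H[e]$ with a sub-Hodge structure of $\bigoplus_{P\in{\rm Sing}\,\Delta}{\rm Gr}^W_1 H^1(F_P)_{(e)}$, the weight-one order-$e$ part of the cohomology of the Milnor fiber $F_P$; only this weight-one part can intervene because $H$ is pure of weight one. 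Promoting the known numerical equalities to such an identification of \emph{rational Hodge structures}, with full control of the weight filtration, is the main obstacle; everything after it is essentially formal.

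\emph{Conclusion.} By the definition of a singularity of CM type (section \ref{cmsingsection}), the vanishing cohomology at each $P\in{\rm Sing}\,\Delta$ is a CM Hodge structure whose simple summands all have cyclotomic endomorphism field — for a unibranched singularity this holds because its Milnor fiber is dominated by Fermat curves, whose Jacobians are isogenous to products of CM abelian varieties with cyclotomic endomorphism algebras. A sub-Hodge structure of a polarizable CM Hodge structure is again of CM type, and (by semisimplicity) its simple summands occur among those of the ambient structure, so each $H[e]$ inherits the CM property with cyclotomic endomorphism fields. Hence $H=\bigoplus_{e\mid N}H[e]$ is a CM Hodge structure with cyclotomic endomorphism fields, and therefore $A={\rm Alb}(\widetilde X_N)$ is isogenous to a product of CM abelian varieties each with endomorphism algebra an \'etale algebra that is a product of cyclotomic fields, as required.
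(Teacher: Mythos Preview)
Your strategy is exactly the paper's: show that $H^1(\widetilde X_N)$ (equivalently $H_1$) is a quotient, hence by Poincar\'e complete reducibility an isogeny summand, of the direct sum of the local Albanese Hodge structures at the singular points of $\Delta$, and then invoke the CM hypothesis on the singularities. The isotypical decomposition under $G$ is harmless but unnecessary; the paper works with the full $H_1$ and the equivariance comes for free.

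The genuine gap is precisely the step you yourself flag as ``the main obstacle'': you assert that $H[e]$ embeds as a sub-Hodge structure of $\bigoplus_P {\rm Gr}^W_1 H^1(F_P)_{(e)}$, but you do not prove it. Invoking Theorem~\ref{summary} and the references \cite{duke}, \cite{lectures} gives only \emph{numerical} divisibility of Alexander polynomials and eigenspace dimensions; it does not by itself produce a morphism of rational Hodge structures, which is what you need. Without that morphism, semisimplicity of polarizable weight-one Hodge structures has nothing to bite on, and the conclusion does not follow.

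The paper (Theorem~\ref{splittingresult}) supplies exactly this missing geometric argument, in four steps. First, the exact sequence of the pair $(X_N, X_N\setminus(\bar R\cup\E))$ identifies $Alb(X_N)$ with the Albanese of the open complement. Second, a Lefschetz-type argument shows that the inclusion of a punctured tubular neighbourhood $U-R$ of the branch locus into $\PP^2-R$ induces a surjection on $\pi_1$, hence a surjection $H_1((U-R)_N)\twoheadrightarrow H_1(X_N\setminus(\bar R\cup\E))$ of mixed Hodge structures. Third, a Mayer-Vietoris decomposition of $(U-R)_N$ into the links $L_{N,P}$ of the surface singularities $z^N=f_P(x,y)$ and a piece with trivial deck action gives a surjection $\bigoplus_P H_1(L_{N,P})\twoheadrightarrow H_1(X_N)$. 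Since all maps come from inclusions of algebraic varieties, they respect the mixed Hodge structures; combined with the identification ${\rm Gr}^W_3 H^2(L_{N,P})(1)\cong {\rm Gr}^W_1 H^1(F_P)$ of Proposition~\ref{comparemhs}, this is precisely the Hodge-theoretic surjection you need. Your proposal is an accurate outline, but to be a proof it must contain this argument (or an equivalent one) rather than defer it.
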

The definition of singularities of a CM type in the case of 
plane curves is given in terms
of the local Albanese variety which is equivalent to the data
of weight one part and its Hodge filtration for 
the mixed Hodge structure on the cohomology of the Milnor fiber of 
the singular points (cf. Definition \ref{singcmtypedefinition}).
The local Albanese variety is the special case of the abelian
variety associated by Deligne with 1-motif in \cite{deligneIII}. 
We say that a plane curve singularity has a 
CM type if its local Albanese variety 
has a CM type.
We refer to \cite{shimura}, \cite{mum} or \cite{milne} for information
on abelian varieties of CM type but recall that those are
abelian varieties $A$ with $End(A)\otimes \QQ$ containing 
an (etale) $\QQ$-subalgebra of rank 
$2 {\rm dim} A$ isomorphic to a product of fields 
(in the case of CM-singularities, we show that
these fields are cyclotomic).

The class of plane curve singularities of CM type is rather large: 
it includes all unibranched singularities 
(cf. theorem \ref{localalbanesemain}), simple singularities, 
$\delta$-essential singularities in the sense of \cite{jose} 
etc. 
However, ordinary multiple points of multiplicity greater
than 3 do not have CM type in general  (cf. Section \ref{localalbsection}).

The precise relation between the topology of the complement 
to the discriminant and the Mordell-Weil rank is given as follows.

\begin{theorem}\label{main} Let 
$\A$ be an isotrivial abelian variety over field $\CC(x,y)$, 
$\pi$ be morphism (\ref{morphismoffamily}) and $A$ be its generic fiber. 
Let $\Delta \subset \PP^2$
be the discriminant of $\pi$ and let $G \subset Aut A$ be the 
holonomy group of $\A$ (cf. \ref{holnomy}).
Assume that: 

a) $G$ is a cyclic group of order $d$ acting on generic fiber $A$ 
of (\ref{morphismoffamily})
without fixed subvarieties of a 
positive dimension.

b) The singularities of $\Delta$ have CM type 
and $\Delta$ is irreducible.

Then

1. the rank of the Mordell-Weil group of $\A$ is zero, 
unless the generic fiber of $\pi$ is an abelian variety of CM-type 
with endomorphism algebra containing a cyclotomic field.

2. Assume that the generic fiber $A$ of $\pi$ is a {\it simple} abelian variety 
of CM type corresponding to the field $\QQ(\zeta_{d})$.
Let $s$ be the multiplicity of the 
factor $\Phi_d(t)$ of the Alexander polynomial 
of $\pi_1(\PP^2-\Delta)$ where 
$\Phi_d(t)$ is the cyclotomic polynomial of degree $d$.
Then:
\begin{equation}\label{inequalitymain}
{\rm rk}MW(\A,\CC(x,y)) \le  s \cdot \phi(d)
\end{equation} 
(here $\phi(d)=deg\Phi_d(t)$ is the Euler function).

3. Let $A$ be an abelain variety as in 2.
If $d$ is the order of the holonomy of $\A$ and the Albanese 
variety $Alb(X_d)$ of the $d$-fold cover $X_d$ of $X$ 
ramified over $\Delta$ 
has $A$ is its direct summand 
with multiplicity $s$ then one has equality in (\ref{inequalitymain}).
\end{theorem}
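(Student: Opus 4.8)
The plan is to trivialize the holonomy by a cyclic base change, reduce the Mordell--Weil computation to one about $G$-equivariant homomorphisms for the resulting constant family, and then feed in Theorem~\ref{mainresultintro}. Let $d=|G|$ and let $X_d\to\PP^2$ be the $d$-fold cyclic cover branched along $\Delta$ attached to the holonomy surjection $\pi_1(\PP^2-\Delta)\twoheadrightarrow G\cong\ZZ/d$; fix a smooth projective model of $X_d$ carrying the deck action, still written $X_d$, and set $L=\CC(X_d)$, so $G=\mathrm{Gal}(L/\CC(x,y))$. Because $\A$ is isotrivial and the holonomy of $\pi$ is exactly $G$, the base change $\A_L$ is the constant abelian variety $A\times_{\CC}X_d$, via an isomorphism equivariant for the deck action on $X_d$ and the holonomy action $G\hookrightarrow\mathrm{Aut}(A)$. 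A restriction--corestriction argument for the Galois extension $L/\CC(x,y)$ (the composite being multiplication by $d$, hence invertible after $\otimes\QQ$) gives ${\rm rk}\,MW(\A,\CC(x,y))={\rm rk}\,MW(\A_L,L)^{G}$; and since a rational map from a smooth projective variety to an abelian variety is a morphism factoring through the Albanese, $MW(\A_L,L)=\mathrm{Mor}(X_d,A)$, with canonical $G$-equivariant quotient $\mathrm{Mor}(X_d,A)/A(\CC)\cong\mathrm{Hom}(\mathrm{Alb}(X_d),A)$. As hypothesis a) makes $A^{G}$ finite, the $A(\CC)$-part carries no rank, and, writing $\mathrm{Hom}_G$ for homomorphisms commuting with $G$ (deck action on the source, holonomy action on $A$),
\[
{\rm rk}\,MW(\A,\CC(x,y))=\dim_{\QQ}\bigl(\mathrm{Hom}_{G}(\mathrm{Alb}(X_d),A)\otimes\QQ\bigr).
\]

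\textbf{Part 1.} Decompose $H^1(X_d,\QQ)$ according to the order of the deck generator $g$: it is a sum of pieces on which $g$ has minimal polynomial $\Phi_e$ for $e\mid d$, and the piece with $e=1$ is $H^1(\PP^2)=0$. Correspondingly $\mathrm{Alb}(X_d)$ is, after an isogeny and after passing to the eigencomponents of $g$, a product $\prod_j D_j$ of simple abelian varieties on each of which $g$ acts as multiplication by a root of unity $\zeta_j\in\mathrm{End}^0(D_j)$. Theorem~\ref{mainresultintro} applies ($\Delta$ irreducible with CM singularities) and shows each $D_j$ is of CM type with $\mathrm{End}^0(D_j)$ a cyclotomic field containing $\QQ(\zeta_j)$. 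Now $\mathrm{Hom}_G(\mathrm{Alb}(X_d),A)\otimes\QQ=\bigoplus_j\mathrm{Hom}_G(D_j,A)\otimes\QQ$, and a summand is nonzero only if $D_j$ is, $G$-equivariantly, isogenous to a simple factor of $A$; hence if the rank is positive, $A$ has a simple isogeny factor of CM type with cyclotomic endomorphism algebra, compatibly with the holonomy. Together with the faithfulness of $G$ and hypothesis a) this yields Part~1 (and in the simple case it says precisely that $A$ is of CM type with cyclotomic endomorphism algebra).

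\textbf{Parts 2 and 3.} Assume now $A$ simple of CM type with $\mathrm{End}^0(A)=\QQ(\zeta_d)$. By hypothesis a), $g$ acts on $A$ with $A^g$ finite, hence as multiplication by a primitive $d$-th root of unity, which we take to be $\zeta_d$. For a factor $D_j$ as above, $\mathrm{Hom}_G(D_j,A)\otimes\QQ\neq 0$ forces $D_j\cong A$ in a way carrying $g|_{D_j}$ to $\zeta_d\cdot$: any isogeny $D_j\to A$ identifies $g|_{D_j}$ with $\sigma(\zeta_d)\cdot$ for some $\sigma\in\mathrm{Gal}(\QQ(\zeta_d)/\QQ)$, and commutativity of $\QQ(\zeta_d)$ makes the $\mathrm{Hom}$ vanish unless $\sigma=\mathrm{id}$; for such $D_j$, $\mathrm{Hom}_G(D_j,A)\otimes\QQ=\mathrm{End}^0_G(A)=\QQ(\zeta_d)$, of $\QQ$-dimension $\phi(d)$. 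Hence ${\rm rk}\,MW(\A)=m\,\phi(d)$, where $m$ counts such $D_j$. Each such $D_j$ embeds up to isogeny in $\mathrm{Alb}(X_d)$ and contributes a line to the $\zeta_d$-eigenspace of $g$ on $H^1(X_d,\CC)$; independence of these lines gives $m\le\dim_{\CC}H^1(X_d,\CC)_{\zeta_d}$, and by the relation between Betti numbers of cyclic branched covers of $\PP^2$ and the Alexander invariant (\cite{duke}; see also \cite{lectures} and the references there) this dimension equals the multiplicity $s$ of $\Phi_d(t)$ in the Alexander polynomial of $\pi_1(\PP^2-\Delta)$. Thus $m\le s$, which is (\ref{inequalitymain}). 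For Part~3 the hypothesis is exactly that $A$, with its holonomy $G$-action, is a direct summand of $\mathrm{Alb}(X_d)$ with multiplicity $s$, i.e.\ $m=s$ (the same eigenspace count shows that under this hypothesis the isotypic part of $\mathrm{Alb}(X_d)$ with $g$ of order $d$ is isogenous to $A^{s}$ with every copy carrying the holonomy action, and nothing else), whence equality in (\ref{inequalitymain}).

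\textbf{Where the difficulty lies.} The real engine is Theorem~\ref{mainresultintro}, which is already available; the delicate point in deriving Theorem~\ref{main} from it is keeping track of the $G$-action on the CM factors of $\mathrm{Alb}(X_d)$ --- it is precisely the gap between ``$A$ is an isogeny factor of $\mathrm{Alb}(X_d)$'' and ``$A$ is a $G$-equivariant isogeny factor'', governed by the holonomy eigenvalue, that makes Part~2 only an inequality and is closed by the hypothesis of Part~3. A secondary technicality is the reduction step, where one must handle the ramification of $X_d\to\PP^2$ and choose a $G$-equivariant smooth model so that $MW(\A,\CC(x,y))$ is genuinely, up to finite groups, the $G$-fixed subgroup of $\mathrm{Mor}(X_d,A)$.
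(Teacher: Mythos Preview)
Your proof is correct and follows essentially the same route as the paper: pass to the trivializing cyclic cover $X_d$, identify $MW(\A)$ with the $G$-invariant part of $\mathrm{Hom}(\mathrm{Alb}(X_d),A)$, and then invoke Theorem~\ref{mainresultintro} (which is the paper's Theorem~\ref{splittingresult}) to reduce to a CM computation. Your treatment is in fact more careful than the paper's own sketch---you explicitly separate the $A(\CC)$-coset ambiguity, track the $G$-equivariance of the isogeny factors (distinguishing $\mathrm{Hom}_G$ from $\mathrm{Hom}$), and articulate why Part~2 is only an inequality (the possible mismatch between the holonomy eigenvalue on a copy of $A$ inside $\mathrm{Alb}(X_d)$ and on the target $A$), points the paper leaves implicit.
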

Theorem \ref{main} has the following as an immediate consequence:
\begin{corollary}\label{coromain}
If $\A$ is a family (\ref{morphismoffamily}) with generic 
fiber $A$ such that $End(A) \otimes \QQ=\QQ(\zeta_d)$ holds and  
such that for each singular point of the discriminant 
the monodormy operator has no 
primitive roots of unity of degree $d$ as an eigenvalue then $rkMW(\A)=0$.

On the other hand, for the Jacobian of the curve
over $\CC(x,y)$ given 
in $(u,v)$ plane by the equation
\begin{equation}\label{finalexampleintro} 
u^p=v^2+(x^p+y^p)^2+(y^2+1)^p
\end{equation}
one has $rkMW={p-1}$ (cf. \ref{finalexample}). 
The Jacobian of generic fiber of the family
(\ref{finalexampleintro}) is a simple abelian variety.
\end{corollary}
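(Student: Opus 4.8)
The plan is to deduce both assertions from Theorem~\ref{main}. For the first, observe that $End(A)\otimes\QQ=\QQ(\zeta_d)$ is a field, so $A$ is simple and of CM type with CM field $\QQ(\zeta_d)$; since $\Delta$ is irreducible, $\PP^2-\Delta$ has finite $H_1$, so the holonomy $G$ is a finite cyclic subgroup of $Aut(A)\subset(End(A)\otimes\QQ)^{\times}=\QQ(\zeta_d)^{\times}$, and every nontrivial $g\in G$ has $g-1\neq 0$ in the division algebra $End(A)\otimes\QQ$, hence $g-1$ is an isogeny and $G$ fixes no positive-dimensional subvariety of $A$. Thus the hypotheses of Theorem~\ref{main} hold, and part~2 gives $\rk MW(\A)\le s\cdot\phi(d)$, where $s$ is the multiplicity of $\Phi_d(t)$ in the Alexander polynomial of $\pi_1(\PP^2-\Delta)$. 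By the divisibility theorem for Alexander polynomials of plane curves (cf. Section~\ref{sectionalexander}), every root of this polynomial is an eigenvalue of the Milnor monodromy at some singular point of $\Delta$; the hypothesis that no such monodromy has a primitive $d$-th root of unity among its eigenvalues therefore forces $s=0$, whence $\rk MW(\A)=0$.

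For the example (with $p$ an odd prime), I would first record the relevant data. Homogenizing, the discriminant of (\ref{finalexampleintro}) is the degree $2p$ curve $\Delta=\{(x^p+y^p)^2+(y^2+z^2)^p=0\}\subset\PP^2$: it is the vanishing locus of the coefficient $c=(x^p+y^p)^2+(y^2+1)^p$ in the affine model $u^p=v^2+c$, the fibre of $\pi$ being nonsingular elsewhere. With $F=x^p+y^p$, $G=y^2+z^2$, the only singular points of $\Delta$ are the $2p$ transversal intersection points of $F=0$ and $G=0$, at each of which $\Delta$ has an $A_{p-1}$ singularity (locally $F^2+G^p=\xi^2+\eta^p$); these are simple (and, for $p$ odd, unibranch) singularities, so of CM type, and $\Delta$ is irreducible since a reduced plane curve with only unibranch singularities is irreducible. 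The rescaling $u\mapsto c^{1/p}u$, $v\mapsto c^{1/2}v$ shows that $u^p=v^2+c$ is isomorphic to $u^p=v^2+1$, so $\A$ is isotrivial with generic fibre $A:=Jac(C_0)$, $C_0\colon u^p=v^2+1$, a curve of genus $(p-1)/2$; the order $p$ automorphism $\sigma\colon u\mapsto\zeta_p u$ has quotient $\PP^1$, hence no eigenvalue $1$ on $H^0(\Omega_{C_0})$, so $\Phi_p(\sigma)=0$ in $End(A)$ and, by the dimension count $[\QQ(\zeta_p):\QQ]=p-1=2\dim A$, one gets $End(A)\otimes\QQ=\QQ(\zeta_p)$ and $A$ simple of CM type. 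Around a meridian of $\Delta$, $c$ vanishes to order one, so $c^{1/p}\mapsto\zeta_p c^{1/p}$ and $c^{1/2}\mapsto-c^{1/2}$, and the induced automorphism $u\mapsto\zeta_p^{-1}u$, $v\mapsto -v$ of $C_0$ acts on $A$ as a primitive $2p$-th root of unity; thus the holonomy is cyclic of order $d=2p$ and acts on $A$ without positive-dimensional fixed subvariety, so Theorem~\ref{main} applies with $A$ simple of CM type $\QQ(\zeta_{2p})=\QQ(\zeta_d)$.

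It then remains to compute $s$ and to verify the hypothesis of Theorem~\ref{main}(3). Here $\Delta=\{F^2+G^p=0\}$ with $\deg F=p$, $\deg G=2$ is a curve of torus type $(2,p)$ whose $2p$ points of type $A_{p-1}$ constitute the generic singular locus of such a curve, so by the known computation of Alexander polynomials of torus curves (cf. Section~\ref{sectionalexander}) the Alexander polynomial of $\pi_1(\PP^2-\Delta)$ is $\frac{(t^{2p}-1)(t-1)}{(t^{2}-1)(t^{p}-1)}=\Phi_{2p}(t)$ (using that $p$ is prime), giving $s=1$ and, by (\ref{inequalitymain}), $\rk MW(\A)\le\phi(2p)=p-1$. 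For the equality I would consider the $2p$-fold cyclic cover $X_{2p}$ of $\PP^2$ branched along $\Delta$ (which exists as $\deg\Delta=2p$): since the roots of the Alexander polynomial are exactly the primitive $2p$-th roots of unity, the standard formula gives $b_1(X_{2p})=\phi(2p)$, so $X_{2p}$ has irregularity $(p-1)/2=\dim A$, and $Alb(X_{2p})$ carries a faithful $\ZZ[\zeta_{2p}]$-action from the covering group; by Theorem~\ref{mainresultintro} and the dimension count $Alb(X_{2p})$ is a simple abelian variety of CM type $\QQ(\zeta_{2p})$, and since $\A$ becomes trivial over $X_{2p}$ (where $c^{1/p}$, $c^{1/2}$ are regular and $u^p=v^2+c$ becomes $C_0\times X_{2p}$) it is isogenous to $A$; hence $A$ is a direct summand of $Alb(X_{2p})$ of multiplicity $s=1$, Theorem~\ref{main}(3) gives $\rk MW(\A)=p-1$, and $A$ is simple as shown. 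The main obstacle I anticipate is the computation of the Alexander polynomial of $\Delta$: one must verify that the $A_{p-1}$ points impose the expected conditions on the relevant linear system of adjoint-type curves (equivalently, that its superabundance vanishes), so that $\Phi_{2p}$ really occurs with multiplicity exactly one; the verifications of the irreducibility of $\Delta$ and of the compatibility of CM types in the last step are comparatively routine.
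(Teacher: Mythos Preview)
Your proposal is correct and follows essentially the same route as the paper. The paper's own proof is much terser: for the first assertion it simply invokes the divisibility theorem (Theorem~\ref{summary}(1)) to conclude that $\Phi_d$ cannot divide the global Alexander polynomial, hence $s=0$ and Theorem~\ref{main}(2) gives rank zero; for the example it cites \cite{duke} directly for the Alexander polynomial $\Phi_{2p}$ of the discriminant and asserts without further detail that $Alb(X_{2p})$ coincides with $A(\QQ(\zeta_{2p}),\Phi)$ for the same CM type $\Phi$ as the generic fibre (Example~\ref{pqsing}), then reads off the rank from $End^{\circ}(A)=\QQ(\zeta_{2p})$.

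What you add over the paper is a careful check that the hypotheses of Theorem~\ref{main} are actually met (cyclicity and order $2p$ of the holonomy via the monodromy of $c^{1/p},c^{1/2}$; CM type and irreducibility of $\Delta$ via unibranchedness of the $A_{p-1}$ points), and a reconstruction of the Alexander-polynomial computation through the torus-curve description; this is exactly what the cited reference \cite{duke} does, so there is no genuine difference in method. The one place where you should tighten the argument is the last step: knowing that $Alb(X_{2p})$ is simple of CM type with field $\QQ(\zeta_{2p})$ and of the right dimension does not by itself give an isogeny to $A$, since two simple CM abelian varieties with the same CM field but different CM types need not be isogenous. The paper handles this by identifying both CM types with the explicit type of Example~\ref{pqsing}; you flag this compatibility as ``routine'', and it is, but it is the point where the CM \emph{type} (not just the CM field) actually enters.
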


It would be interesting to know if ranks of isotrivial
abelian varieties over $\CC(x,y)$ with fixed generic fiber 
are bounded (or have non-trivial bounds in terms of
degree of discriminant). 
Note that inequality (\ref{inequalitymain}) 
only shows that with assumption of Theorem \ref{main}
such bounds are equivalent to the 
bounds on the possible multiplicities  of $\Phi_d$ in the 
Alexander polynomial of the discriminant.
Very little is known at the moment about bounds on such 
multiplicites (cf. \cite{jose} containing a discussion of the relation between
the bounds on the rank and the degree of the latter).


The content of this paper is as follows. 
In the next section we recall the 
background material used below.
The section \ref{localalbsection} discusses the local Albanese varieties 
of plane curve singularities and cases when they have CM type.
The decomposability of the Albanese variety of cyclic branched covers 
(under certain conditions) is proved in section \ref{splittingsection}. 
Section \ref{proofmain} contains the proof of the theorem 
\ref{main}  
and gives 
examples of specific situations in which the above theorem 
can be applied. The Theorem \ref{main} in fact can be used in both 
directions: it gives many examples in which one obtains
explicitly the rank of the Mordell-Weil group. On the other hand,
it also provides a mean to give a bound on the complexity of the Alexander 
module of certain curves (cf. \cite{jose}). The example \ref{finalexample},
discussing the discriminant with largest known 
multiplcity of $\Phi_d(t)$ is given at the end of the last section.

Finally, I want to thank J.I.Cogolludo for comments on this paper.

\begin{section}{Abelian varieties 
over transcendental extensions of $\CC$}\label{intro}

\subsection{Isotrivial abelian varieties, discriminant and holonomy}
\label{holonomysubsection}

As in the Introduction, we fix a flat proper morphism 
of smooth complex projective varieties 
$\pi: \A \rightarrow X$ with generic fiber being an abelian variety 
over $\CC$, i.e. an abelian variety over $\CC(X)$.

A {\it rational} section (resp. a section) of $\pi$ 
is a rational (resp. regular) 
map $s: X \rightarrow \A$ such that 
$\pi \circ s$ is the identity on the domain of $s$.

An abelian variety  $\pi: \A \rightarrow X$ is called isotrivial if 
for an open set $U \subseteq X$ the fibers of $\pi$  
over any pair of points 
$x,y \in U$ are isomorphic as polarized abelian varieties
 with the polarization induced from $\A$.
The generic fiber of $\pi$ will be denoted $A$.

The discriminant locus $\Delta$ of $\pi$ is the subvariety 
of $X$ consisting of points $x$ for which the fiber $\pi^{-1}(x)$ 
is not smooth. The map $\pi^{-1}(X-\Delta) \rightarrow X-\Delta$ is a locally 
trivial fibration.
It follows from \cite{levin} (cf. also \cite{kollar} and \cite{serrano})
that there is an unramified Galois covering 
$s: X'-\Delta' \rightarrow X-\Delta$ such that

a) the Galois group $G$ is a subgroup of automorphisms of the fiber
$A$ preserving its polarization induced from the polarization of $\A$ and

b) such that 

\begin{equation}\label{levin}
   \A=\{(X'-\Delta') \times A\}/G
\end{equation}

\noindent
with the action given by $g(x,a)=(gx,ga) \ \ (x \in X'-\Delta', a \in A)$.
The equality (\ref{levin}) is a birational isomorphism which is biregular 
if one replaces the left hand side by the open subset $s^{-1}(X-\Delta)$
in $\A$.

We shall assume that  $X'-\Delta'$ is an open set in its $G$-equivariant 
smooth compactification $X'$, i.e $X'-\Delta'$ is 
the complement to a divisor $\Delta' \subset X'$
where $X'$ is a $G$-equivariant resolution of singularities of
a $G$-equivariant compactification of $X'-\Delta'$.

\begin{dfn}\label{holnomy} 
The holonomy group of an isotrivial abelian variety $\A$ 
is a group $G$ which satisfies the conditions a) and b) above 
and such that no quotient of $G$ satisfies them.
The holonomy map is the composition 
\begin{equation}\label{eqholnomy}
\pi_1(X-\Delta) \rightarrow G \rightarrow 
Aut A
\end{equation}
\end{dfn}

\bigskip 
Note that the first map in (\ref{eqholnomy}) can be described as
the homomorphism corresponding to the 
covering map $X'-\Delta' \rightarrow X-\Delta$, i.e. having the kernel 
isomorphic to $\pi_1(X'-\Delta')$.

In this paper we are concerned only with the case $X=\PP^2$. 
Since we assume in Th. \ref{main} b) that the image  
of the holonomy is non-trivial it follows that 
$\Delta$ has codimension one in $X$.



\subsection{Chow trace of isotrivial families}

Next recall Lang-Neron's finite generation result for abelian 
varieties over function fields starting with the 
definition of Chow trace (cf. \cite{langneron}, \cite{conrad}).
Given an extension $K/k$ of fields and an abelian variety $A$ over $K$, 
there exist an abelian variety 
$B$ over $k$ (called the \emph{Chow trace}) 
and homomorphism 
$\tau: B \otimes_k K \rightarrow A$ \footnote{$B \otimes_k K$ 
is the result of field extension of $B$.}
defined over $k$ 
such that for any extension $E/k$ disjoint from $K$, 
abelian variety $C$ over $E$ and morphism
$\alpha: C \rightarrow A$ over $KE$ there 
exists $\alpha': C \rightarrow B$ such that $\alpha=\tau \circ \alpha'$
(after appropriate field extensions of $A$ and $B$).
A description of the Chow trace in the case of relative Picard 
schemes is given in \cite{hindry} (cf. Prop.2.2, also \cite{shioda99}).
In the case of isotrivial abelian varieties, we have the following
(which in the case of relative Picard schemes is a consequence 
from \cite{hindry}):

\begin{prop} Let $\A \rightarrow X$ be an isotrivial 
abelian variety over $\CC(X)$ with holonomy $G$.
Then $\CC$-trace of $\A$ is isomorphic to the abelian subvariety $A^G$ of $A$ 
which is the maximal subvariety of $A$ fixed by the holonomy 
group $G$.
\end{prop}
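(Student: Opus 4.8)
The plan is to show both inclusions between the $\CC$-trace $B$ of $\A/\CC(X)$ and the fixed abelian subvariety $A^G \subseteq A$, after choosing a point $x_0 \in X - \Delta$ over which the fiber of $\pi$ is canonically identified with $A$. The starting point is the explicit model $\A = \{(X'-\Delta')\times A\}/G$ from (\ref{levin}). First I would produce a morphism $A^G \otimes_\CC \CC(X) \to A$ over $\CC(X)$: since $G$ acts trivially on $A^G$, the product $(X'-\Delta')\times A^G$ descends under the diagonal $G$-action to $(X-\Delta)\times A^G$, giving a constant abelian subscheme of $\A$ over $X-\Delta$ whose generic fiber is $A^G \hookrightarrow A$. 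This exhibits $A^G$ as (the base change of) a constant abelian subvariety of $\A$, hence by the universal property of the Chow trace there is a factorization $A^G \to B$, i.e. $A^G$ is contained in $B$ up to isogeny.

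For the reverse inclusion I would use the universal property in the other direction. By definition of the $\CC$-trace there is a morphism $\tau: B \otimes_\CC \CC(X) \to A$ over $\CC(X)$, and by Lang--N\'eron the induced map is injective up to finite kernel, so $B$ maps (up to isogeny) to an abelian subvariety $\tau(B) \subseteq A$. The key point is that this image must be $G$-invariant: spreading $\tau$ out over a suitable Zariski-open $U \subseteq X-\Delta$, the constant abelian scheme $B\times U$ maps into $\A|_U$; pulling back along $X'-\Delta' \to X-\Delta$ and using the description of the $G$-action $g(x,a) = (gx,ga)$, the image of $B$ in the fiber $A$ is independent of the chosen point of the orbit, which forces $\tau(B) \subseteq A^G$. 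Equivalently, a section of $\A$ coming from $B$ is, in the model (\ref{levin}), a $G$-equivariant map $X'-\Delta' \to A$ that is constant (being pulled back from $X-\Delta$), hence takes values in $A^G$; running this over all of $B$ gives $\tau(B)\subseteq A^G$. Combining the two inclusions yields $B \sim A^G$, and since both are abelian subvarieties of $A$ with the polarization induced from $\A$, Poincar\'e reducibility lets us upgrade the isogeny to an isomorphism onto $A^G$, the maximal subvariety of $A$ fixed by $G$.

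The main obstacle I anticipate is the bookkeeping around base points and field extensions: the identification of the generic fiber with $A$ depends on a choice of point in $X-\Delta$, the $\CC$-trace is only defined up to isogeny and its universal property involves passing to extensions of $A$ and $B$, and one must check that $A^G$ is genuinely an abelian \emph{sub}variety (not just a subgroup scheme) with the correct polarization — this uses that $G$ acts by polarization-preserving automorphisms, so $A^G$ is the connected component of a kernel/image of an idempotent-like correspondence and Poincar\'e's complete reducibility theorem applies. A secondary subtlety is that the holonomy $G$ is defined (Definition \ref{holnomy}) to have no proper quotient satisfying (a)--(b), but this minimality is not actually needed here: the fixed locus $A^G$ only depends on the image of $G$ in $\mathrm{Aut}\,A$, which is the same for $G$ and for any of the groups in the equivalence class, so the statement is insensitive to the choice. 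I would also remark, as the paper does, that in the relative Picard case this recovers Hindry's description, providing an independent check.
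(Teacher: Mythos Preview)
Your proposal is correct and follows essentially the same approach as the paper: both construct the inclusion $A^G \hookrightarrow \A$ as a constant abelian subscheme (you via descent of $(X'-\Delta')\times A^G$ under the diagonal $G$-action, the paper via continuation along paths), and both show that any constant abelian variety $B$ mapping to $\A$ has image contained in $A^G$ by a holonomy argument (the image in a fixed fiber is preserved under transport along loops, hence $G$-invariant). The only difference is organizational: the paper directly verifies that $A^G$ satisfies the universal property of the trace, whereas you frame it as proving the two inclusions $A^G \subseteq B$ and $\tau(B)\subseteq A^G$ and then invoke Poincar\'e reducibility; your scheme-theoretic language (spreading out, descent) replaces the paper's more topological path-lifting, but the content is the same.
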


\begin{proof} For any path $\gamma: [0,1] \rightarrow X-\Delta$ 
the identification 
(\ref{levin}) provides the map: $h_{\pi}: \pi^{-1}(0) \rightarrow 
\pi^{-1}(1)$ as the composition of a fixed identification of $\pi^{-1}(0)$ 
with the fiber over a point in $s^{-1}(\gamma(0))$ and a restriction 
of projection $(X'-\Delta') \times A \rightarrow (X'-\Delta')/G \subset
\A$ in 
(\ref{levin}) on the end point of the $s$-lift of path $\gamma$.
This is well-defined since the lift of $s$ is unique, but a change of 
the path $\gamma(t)$ results in a composition of $h_{\pi}$ 
with an automorphism from $G$. In particular, 
one has an identification of subvarieties $A^G$ of any two fibers
of $\pi$ and 
the map $A^G \times (X-\Delta) \rightarrow \A$ can 
be defined using continuation 
along paths. This yields the trace map  
$\tau: A^G\otimes k(X) \rightarrow \A$.

Next, given a map $T: B \times (X-A) \rightarrow \A$ commuting with 
projections on $X-A$, restricting it 
on a loop $\gamma$ 
in $X-\Delta$ yielding a holonomy transformation $g \in G$,
one sees that 
$T\vert_{(B \times \gamma(0))}: B \times \gamma(0) \rightarrow \A_{\gamma(0)}$ 
(the fiber of $\A$ over $\gamma(0)$)
has the image belonging to $A^G$ i.e. we have factorization 
of $T$ through $\tau$. This implies the universality 
property in the definition of trace.
\end{proof}

An automorphism group $G$ of a polarized abelian variety is finite
(cf. \cite{langeabvar} Ch.5 Cor.1.9) 
and (\ref{levin}) 
can be used to construct an isotrivial family of polarized abelian 
varieties for any etale covering of $X-\Delta$ 
with a Galois group $G \subset Aut A$.

With the notion of trace in place, one can state 
a function field version of the Mordell-Weil theorem as follows: 

\begin{theorem}\label{langneronref}(cf. \cite{langneron}) 
Let $K$ be a function field of a variety over a field $k$.
Let $A$ be an abelian variety
defined over $K$ and $\tau\colon  B \rightarrow \A$ is its trace. 
Then the  Mordell Weil group 
$A(K)/\tau B(k)$ is finitely generated.
\end{theorem}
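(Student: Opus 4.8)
The plan is to run the classical Mordell--Weil infinite descent, adapted to the function-field setting. Write $M=A(K)/\tau B(k)$ for the group to be shown finitely generated. I would reduce everything to two inputs: a \emph{weak Mordell--Weil theorem}, that $M/nM$ is finite for some integer $n\ge 2$ (in characteristic zero any $n$ works), and a \emph{theory of heights}, namely a quadratic function $\hat h\colon A(K)\to\RR_{\ge 0}$ that vanishes on $\tau B(k)$ and on torsion and has the Northcott property \emph{on the quotient}: the set $\{x\in M:\hat h(x)\le C\}$ is finite for every $C$. Granting these two inputs, finite generation of $M$ is formal: fix coset representatives $Q_1,\dots,Q_r$ of $M/nM$ and put $c=\max_j\hat h(Q_j)$; any $P\in M$ can be written $P=Q_i+nP_1$ with $\hat h(P_1)=\tfrac1{n^2}\hat h(P-Q_i)\le\tfrac1{n^2}(2\hat h(P)+2c)\le\tfrac12\hat h(P)+\tfrac{2c}{n^2}$; iterating drives the height strictly down, so after finitely many steps one finds that $P$ lies in the subgroup generated by $Q_1,\dots,Q_r$ together with the finite set $\{x\in M:\hat h(x)\le 4c\}$, and $M$ is therefore finitely generated. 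I would isolate this formal descent lemma first, so that the geometry is cleanly packaged into the two inputs.

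For the weak Mordell--Weil step I would use Kummer theory on a model. Choose a normal projective variety $V$ over $k$ with $k(V)=K$; since $\mathrm{char}\,k=0$, after deleting the locus of bad reduction one spreads $A$ out to an abelian scheme $\mathcal A\to U$ over a dense open $U\subseteq V$. The multiplication-by-$n$ sequence $0\to A[n]\to A\to A\to 0$ over $K$ gives an injection $A(K)/nA(K)\hookrightarrow H^1(K,A[n])$, and since $\mathcal A/U$ is an abelian scheme every $P\in A(K)$ extends to a section of $\mathcal A$ over $U$, so its Kummer class is unramified along $U$ and lies in the image of $H^1(U,A[n])$. When $k$ is algebraically closed --- which is exactly the case of interest here, $k=\CC$ and $V=\PP^2$ --- this finishes the step: $H^1(U,A[n])$ is finite because the etale fundamental group of $U$ is topologically finitely generated and $A[n]$ is finite, while $B(k)$ is $n$-divisible, so $\tau B(k)\subseteq nA(K)$ and hence $M/nM=A(K)/nA(K)$. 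For a general base field $k$ one base changes to $\bar k$, applies the above, and descends along $\mathrm{Gal}(\bar k/k)$, the relevant descent cohomology being finite because the groups involved are finitely generated; alternatively one first reduces to transcendence degree one by a Neron fibration argument and treats the function field of a curve directly.

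For the height I would fix a symmetric ample line bundle $L$ on $A$ over $K$, spread it out to a line bundle $\mathcal L$ on a projective model $\overline{\mathcal A}\to\overline V$ of the family over a smooth projective model $\overline V$ of $V$, and fix an ample class $H$ on $\overline V$. For $P\in A(K)$ let $\overline P\subseteq\overline{\mathcal A}$ be the Zariski closure of the corresponding point of the generic fibre, and set $h(P)=\bigl(\,\overline P\cdot c_1(\mathcal L)\cdot c_1(H)^{\dim V-1}\,\bigr)$, an intersection number on $\overline{\mathcal A}$. The theorem of the cube yields the quasi-parallelogram law $h(P+Q)+h(P-Q)=2h(P)+2h(Q)+O(1)$, so the limit $\hat h(P)=\lim_m 4^{-m}h(2^mP)$ exists and defines a quadratic form on $A(K)$ that vanishes on torsion.

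The step I expect to be the main obstacle is the Northcott property relative to the trace. Over an infinite base field $k$ the set of $P\in A(K)$ with $\hat h(P)\le C$ is genuinely infinite --- every point in the image of $\tau$ has $\hat h=0$ --- so one cannot argue naively; instead one must show that $\hat h$ descends to a positive-\emph{definite} quadratic form on $M\otimes\RR$ and that $\{x\in M:\hat h(x)\le C\}$ is finite. This is where the defining property of the $k$-trace is used in an essential way: one shows that the kernel of $\hat h$ on $A(K)$ equals $\tau B(k)+A(K)_{\mathrm{tors}}$, and that an infinite set of $K$-points of bounded height with pairwise distinct images in $M$ would, after spreading out, yield a bounded, positive-dimensional family of sections of $\overline{\mathcal A}\to\overline V$, producing a nonconstant morphism to $A$ from an abelian variety over $k$ and contradicting the maximality of $B$ in the definition of the trace. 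Carrying out this boundedness--rigidity argument --- as is done in \cite{langneron} (see also \cite{conrad}) --- is the technical heart; with it in hand, the formal descent of the first paragraph concludes the proof.
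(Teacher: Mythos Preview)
The paper does not prove this theorem at all: it is stated with the attribution ``(cf.\ \cite{langneron})'' and used as a black box, with \cite{conrad} cited elsewhere for background on the Chow trace. So there is no ``paper's own proof'' to compare against.

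Your outline is the standard Lang--Neron argument (weak Mordell--Weil via Kummer theory on a model, N\'eron--Tate height built from intersection theory on a spread-out family, and the key identification of the kernel of $\hat h$ with $\tau B(k)$ plus torsion), and it is essentially correct as a sketch. The one place where your write-up is slightly glib is the Northcott step: you correctly flag it as the heart of the matter, but the passage from ``bounded family of sections'' to ``nonconstant morphism from an abelian variety over $k$'' needs the actual rigidity/specialization argument, and in positive characteristic there are genuine subtleties (regularity of the trace map, inseparability) that your characteristic-zero phrasing sidesteps. Since the paper only uses the case $k=\CC$, your reductions are adequate for its purposes.
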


\subsection{Examples of isotrivial families of 
abelian varieties with cyclic group of automorphisms.}\label{lefexample}

If the automorphism group of an abelian variety is cyclic, then 
{\it any} family of abelian varieties with such fiber 
has a cyclic holonomy group. 
Here is a way to obtain such examples. 
Jacobians of curves with cyclic automorphism groups have cyclic automorphism 
groups as well since by the 
Torelli theorem
$Aut(J(C))=Aut(C)/{\pm I}$ (resp. $Aut(J(C))=Aut(C)$)
for non-hyperelliptic curves (resp. for hyperelliptic curves).
(cf. \cite{langerec}, \cite{weil})).
As an example of curves with a cyclic automorphism group 
one can consider the curves $C_{p-2,p}$ with the following 
equation of the affine part (cf. \cite{lef},\cite{bertin},\cite{kalel} and section \ref{subsectiononstructure} below):
\begin{equation}\label{belyifamily}
    u^p=v^{p-2}(1-v)
\end{equation}

\begin{example}\label{examplelef} 
Let $\Phi(x,y)$ be a curve 
in $\CC^2$ which is the affine portion of a smooth projective 
curve having degree $p$ \footnote{Assumption of smoothness 
will be used below to show that the Mordell Weil rank in this 
case is zero. The construction in this example yields an isotrivial 
family of Jacobians for any $\Phi$.}. 
Consider the curve over $\CC(x,y)$ given by 
\begin{equation}\label{examplebelyi}
    u^p\Phi(x,y)=v^{p-2}(1-v)
\end{equation}
Over the complement to $\Phi(x,y)=0$ we have the family of curves 
isomorphic to the curve (\ref{belyifamily}). 
This family is trivialized over  $X'$ given by $z^p=\Phi(x,y)$.
The Jacobian of (\ref{examplebelyi}) over $\CC(x,y)$ provides
an example of an isotrivial abelian variety over this field.
\end{example}

\subsection{Abelian varieties of CM type}

A large class of examples of abelian varieties {\it admitting}
cyclic group of automorphisms, which will appear in several contexts below,
is given by abelian varieties of CM type.
Recall that a CM field is an imaginary quadratic 
extension of a totally real number field 
(cf. \cite{mum}, \cite{shimura}, \cite{milne}). 
A CM-algebra is a finite product of CM-fields. Such an algebra
$E$ is endowed with an automorphism $\iota_E$ such that 
for any $\rho: E \rightarrow \CC$ one has $ \rho \circ \iota_E=\bar \rho$
(the conjugation of $\rho$).
CM-type of a CM-algebra $E$ is:
\begin{equation}
\{\Phi \subset Hom(E,\CC) \vert Hom(E,\CC)=\Phi \cup \iota_E(\Phi), 
\Phi \cap \iota_E(\Phi)=\emptyset\}
\end{equation}

For a CM field $K$ of degree $g$ over $\QQ$, a CM type $\Phi$ is 
a collection of pairwise, not conjugate, embeddings 
$\sigma_1,...,\sigma_g$ of 
$K \rightarrow \CC$. 

For a CM algebra $E$ with a chosen CM-type $\Phi$ and 
a lattice in $E$ i.e. a subgroup $\Lambda$ such that 
$E=\Lambda \otimes_{\bf Z} \QQ$ (e.g. product of the rings 
of integers of each of CM-fields composing $E$)   
corresponds the torus $E \otimes \RR/\Lambda$
with the complex structure induced from 
the identification $E \otimes_{\QQ} \RR \rightarrow 
\CC^{{dim_{\QQ} E} \over 2}$ given by the direct sum of the 
homomorphisms $\phi \in \Phi$ where $\Phi$ is the CM type.
This complex torus is an abelian variety (cf. \cite{shimura},\cite{mum},
\cite{milne}).
The following example of a CM type appears below in the context 
of singularities:
\begin{example}\label{primitivity}
Let $p$ be an odd prime. Consider the set $\Phi$ of  
roots of unity of degree $p$ with positive imaginary part
(i.e. $exp ({{2 \pi i k} \over p})$ for $1 \le k \le {{p-1} \over 2}$).
The set of embeddings of $\QQ(\zeta_p)$ induced 
by the maps $exp({{2 \pi i} \over p}) \rightarrow \omega, \omega \in \Phi$ 
provides a CM type of $\QQ(\zeta_p)$.

Note that this CM type is primitive in the sense that the 
corresponding abelian variety is simple (cf. \cite{shimura}, section 8.4, 
p.64).

More generally, for a pair of primes $p,q$, the set of primitive roots 
of unity of degree $p \cdot q$ with a positive imaginary part provides
a CM type of the field $\QQ(\zeta_{pq})$. 

\end{example}

\subsection{Alexander polynomials}\label{sectionalexander}

The Alexander polynomial is an invariant of the fundamental 
group which allows one to state conditions under which 
the first Betti number of a cyclic cover is positive (cf. \cite{lectures}). 
Recall that for a group $G$ and a surjection $\sigma: G \rightarrow W$
onto a cyclic group $W$, 
one defines the Alexander polynomial as follows. 
Let $K=Ker \sigma$ and $K/K'$ be the abelianization of $K$.
It follows from the exact sequence
$$0 \rightarrow K'/K'' \rightarrow K/K'' \rightarrow W \rightarrow 0$$
 that 
$W$ acts on $K'/K''$ via conjugation on $K'$.
\begin{dfn} The Alexander polynomial $\Delta_G(t)$ of $G$ relative 
to the surjection $\sigma$ is the characteristic polynomial of a generator 
of $W$ acting on the vector space $K/K' \otimes \CC$ 
(this space has a finite dimension, cf. \cite{lectures};
in cases when $W$ is finite, one chooses the polynomial of the minimal 
degree among polynomials corresponding to different actions of generator).
Moreover, one has a cyclic decomposition $K'/K'' \otimes \CC=\oplus_i 
\CC[W]/\lambda_i$ \footnote{If $W$ is finite, then 
$\CC[W]$ is isomorphic to $\CC[t,t^{-1}]/(t^{\vert W \vert}-1)$ and 
$\lambda_i \in \CC[W]$ are viewed as polynomials in $\CC[t]$ 
having the minimal degree in its coset.
This definition is slightly 
different from the one used in \cite{duke} where only infinite $W$ was 
used. The reduction to the case where $W$ is infinite 
was done by replacing the projective curve $D$ by its affine 
portion such that the line at infinity $L$ is transversal to $D$.
If $D$ is irreducible then $H_1(\PP^2-D,\ZZ)=\ZZ_{deg D}$ but
$H_1(\PP^2-D\cup L,\ZZ)=\ZZ$. 
Moreover, for reduced $D$, 
the surjection $\pi_1(\PP^2-D\cup L) \rightarrow \ZZ$
given by the linking number with $D$ 
yields the same polynomial  
as surjection  $\pi_1(\PP^2-D) \rightarrow \ZZ_{deg D}$ given by the 
linking with $D$.}
in terms of which $\Delta_D(t)=\Pi_i \lambda_i(t)$ where
$t$ acts as a generator of $W$.
\end{dfn}
The properties of the Alexander polynomials  of the fundamental 
groups of the complements to the algebraic curves in $\PP^2$
are summarized by the following:

\begin{theorem}\label{summary}
Let $G=\pi_1(\PP^2-D)$ where $D$ is  
a projective curve of degree $d$ with arbitrary singularities 
and with $r$ irreducible components. Let $\Delta_D(t)$ denote 
the Alexander polynomial of $D$ relative to surjection $G \rightarrow 
\ZZ_{\deg D}=W$ sending a loop to  the class modulo $deg D$ of its 
total linking number with $D$
(cf. \cite{duke}) 


1. For each singularity $P$ of the curve $D$ 
denote by $\Delta_P(t)$  the Alexander polynomial of the local 
fundamental group 
$\pi_1(B_P-B_P \cap D)$ where $B_P$ is a small ball about $P$ in $\PP^2$ 
(as above, the Alexander polynomial is defined 
relative to surjection $\pi_1(B_P-B_P \cap D) \rightarrow \ZZ$ 
given by the total linking number with $B_P \cap D$).

Then the Alexander polynomial $\Delta_D(t)$
polynomial of $\pi_1(\PP^2-D)$ divides the product:
\begin{equation} \Pi_{P \in Sing(H \cap D)}\Delta_P(t)
\end{equation}
In particular the Alexander polynomial of $\pi_1(\PP^2-D)$ is  
cyclotomic.

2. Let $X_N$ be an $N$-fold cyclic branched covering space of $\PP^2$ 
ramified over $D$, and corresponding to a surjection of $W$ 
onto a cyclic group of order $N$. 
Then the characteristic polynomial of the generator of $W$ acting on 
$H_1(X_N,\CC)$   is equal to 
\begin{equation}
 \sum_i gcd(t^N-1,\lambda_i(t))
\end{equation}

3. With each singularity $P$ and a rational number $\kappa \in (0,1)$ 
one associates the ideal $I(P,\kappa)$ in the local ring of $P$
(the ideal of quasi-adjunction) \footnote{$I(P,\kappa)$ 
is defined in terms of the germ of the curve and $\kappa \in \QQ$
(cf. \cite{alexhodge});  
there is identification 
of the ideals of quasi-adjunction and the multiplier ideals (ibid.)}
with the following properties.
Let $\I_{\kappa} \subset \O_{\PP^2}$ be the ideal sheaf for which the support 
of $\O_{\PP^2}/\I_{\kappa}$ is the set of singularities of $D$ different than nodes
and stalk of $\I_{\kappa}$ at $P$ is $I(P,\kappa)$ then
\begin{equation}
  \Delta_D(t)=(t-1)^{r-1}\Pi_{\kappa} [(t-exp(-2 \pi i \kappa))(t-exp(2 \pi i \kappa))]^
{dim H^1(\PP^2,\I_{\kappa}(d-3-\kappa d))} 
\end{equation}
where the product is over all $\kappa={i \over d}, 1 \le i \le {d-1}$

In particular, for curves with singularities locally equivalent 
to $u^p=v^q$ only, one has: 
\begin{equation}
\Delta_D(t)=[{{(t^{pq}-1)(t-1)}\over {(t^p-1)}{(t^q-1)}}]^s
\end{equation}
where $s=dimH^1(\PP^2,\I(({1 \over p}+{1 \over q})d-3))$ and
 $\I$ is the ideal sheaf defined by conditions:

a) $\O_{\PP^2}/\I$ is supported
at singularities of $D$ 

b) stalk $\I$ at each singular point is 
the maximal ideal of the local ring. 
\footnote{Such a description of $\I$ is a consequence of a calculation yielding that 
the maximal ideal is the ideal of quasi-adjunction of $u^p=v^q$
and $\kappa=1-{1 \over p}-{1 \over q}$}
\end{theorem}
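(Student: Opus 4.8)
The plan is to treat the three parts separately, the first two being the essentially topological results of \cite{duke} and the third the Hodge-theoretic statement of \cite{alexhodge}. For \textbf{Part 1}: following the reduction recorded in the footnote above, I would replace $D$ by its affine part together with a line $L$ at infinity transversal to $D$, so that $H_1(\PP^2-D\cup L,\ZZ)=\ZZ$ and $\Delta_D(t)$ is the order of the torsion $\CC[t,t^{-1}]$-module $H_1(\tilde U,\CC)$, where $\tilde U\to U=\PP^2-D\cup L$ is the infinite cyclic cover given by the total linking number. The divisibility $\Delta_D(t)\mid\prod_P\Delta_P(t)$ would come from a Mayer--Vietoris decomposition of $U$ into the preimage of a small regular neighbourhood of $D$ --- carrying the local Alexander modules $H_1(B_P-B_P\cap D,\CC)$ of the singular points, the smooth pieces of $D$ contributing nothing --- and the preimage of its complement: the comparison map from the global Alexander module to the sum of the local ones is injective on torsion away from $t=1$, which is exactly the divisibility theorem of \cite{duke}. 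Cyclotomicity of $\Delta_D(t)$ is then inherited from that of each $\Delta_P(t)$, which is the monodromy theorem (quasi-unipotence of the monodromy of the isolated singularity $B_P\cap D$).

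For \textbf{Part 2} I would compare the $N$-fold \emph{unbranched} cover $U_N\to U$ with the branched cover $X_N\to\PP^2$. The space $U_N$ fibres up to homotopy over $S^1$ with fibre $\tilde U$, so the Wang exact sequence reads $H_1(\tilde U,\CC)\xrightarrow{\,t^N-1\,}H_1(\tilde U,\CC)\to H_1(U_N,\CC)\to H_0(\tilde U,\CC)\xrightarrow{\,t^N-1\,}H_0(\tilde U,\CC)$. Combining this with the cyclic decomposition $H_1(\tilde U,\CC)=\bigoplus_i\CC[t,t^{-1}]/(\lambda_i)$ and the elementary fact that, since $t^N-1$ is squarefree, the kernel (equivalently cokernel) of $t^N-1$ on $\CC[t]/(\lambda_i)$ has dimension $\deg\gcd(t^N-1,\lambda_i(t))$, one computes the eigenvalue $\neq1$ part of $H_1(U_N,\CC)$ with its $t$-action. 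Passing from $U_N$ to $X_N$ changes only the eigenvalue-$1$ part of $H_1$ (the meridians of $D$ become trivial once the branch locus is filled in), and the remaining bookkeeping produces the stated formula for the characteristic polynomial of the deck generator on $H_1(X_N,\CC)$.

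\textbf{Part 3} is where the real work lies. I would invoke the identification from \cite{alexhodge} of the roots of $\Delta_D(t)$ with the numbers $\exp(\pm2\pi\ii\kappa)$ for $\kappa$ a constant of quasi-adjunction, together with the computation of their multiplicities as superabundances. Concretely, the $\exp(2\pi\ii\kappa)$-eigenspace of $H_1$ of the relevant cyclic cover is identified, via the eigensheaf decomposition of the direct image of the structure sheaf and an Esnault--Viehweg-type vanishing theorem, with $H^1(\PP^2,\I_\kappa(d-3-\kappa d))$, i.e. the failure of the linear system of curves of that degree to impose independent conditions along the subscheme cut out by the ideals of quasi-adjunction $I(P,\kappa)$; letting $\kappa$ run over $i/d$ and splitting off the factor $(t-1)^{r-1}$ coming from the $r$ irreducible components of $D$ yields the product formula. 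For the case of singularities locally equivalent to $u^p=v^q$, I would then specialize: a direct computation (the footnoted one) shows that the ideal of quasi-adjunction of $u^p=v^q$ is the maximal ideal, with the single jumping number $\kappa=1-\tfrac1p-\tfrac1q$, while a Puiseux computation identifies the local Alexander polynomial as $\dfrac{(t^{pq}-1)(t-1)}{(t^p-1)(t^q-1)}$ --- a product of cyclotomic polynomials $\Phi_m$ with $m\mid pq$, $m\nmid p$, $m\nmid q$; substituting $\kappa=1-\tfrac1p-\tfrac1q$ into the general formula then gives the displayed expression with $s=\dim H^1\big(\PP^2,\I((\tfrac1p+\tfrac1q)d-3)\big)$.

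The main obstacle is clearly Part 3: unlike Parts 1 and 2, which are manipulations of the Alexander module, it is not formal and genuinely needs the mixed Hodge structure on the cohomology of the Milnor fibre --- equivalently, the local Albanese data --- together with a vanishing theorem, in order to convert the local Hodge-theoretic invariants into the dimension of a single global linear system on $\PP^2$.
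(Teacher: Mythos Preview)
The paper does not actually prove Theorem~\ref{summary}; immediately after the statement it says ``We refer to \cite{duke}, \cite{lectures} for proofs of these results,'' and the Hodge-theoretic Part~3 is attributed to \cite{alexhodge}. Your outline is a faithful sketch of what is in those references, so in that sense you and the paper agree completely.

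One small point of formulation in Part~1: you phrase the key step as ``the comparison map from the global Alexander module to the sum of the local ones is injective on torsion away from $t=1$.'' The argument in \cite{duke}---and the version the present paper reproduces in Steps~2--3 of the proof of Theorem~\ref{splittingresult}---runs in the opposite direction: one shows that the map induced by inclusion from $\bigoplus_P H_1(\widetilde{B_P-B_P\cap D})$ to the homology of the infinite cyclic cover of the complement is \emph{surjective} (after discarding the contribution of the smooth part of $D$, which lands in the $t=1$ eigenspace). Surjectivity of torsion $\CC[t,t^{-1}]$-modules gives the same divisibility conclusion, of course, so this does not affect correctness; but if you are going to point to the Mayer--Vietoris argument you should state the map in the direction it naturally arises.

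Otherwise your Parts~2 and~3 are exactly the arguments of \cite{duke} and \cite{alexhodge} respectively, including the identification of the ideal of quasi-adjunction for $u^p=v^q$ with the maximal ideal at the unique jumping value $\kappa=1-\tfrac1p-\tfrac1q$, which is precisely what the paper records in its footnote.
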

We refer to \cite{duke}, \cite{lectures} for proofs of these results but note 
that much of the reasoning in the proof of the  
theorem \ref{splittingresult}
is a Hodge theoretical refinement of the topological arguments in 
the proof of the first part of the theorem \ref{summary}. 
The local type of singularities 
of plane curves which come up in part 2 in theorem \ref{summary}
and associated mixed Hodge structures 
are discussed in the next section.
\end{section}

\section{Local Albanese varieties and singularities of CM-type}
\label{localalbsection}

The main result of this section is Theorem \ref{localalbanesemain},
describing the structure of the local Albanese varieties 
of unibranched singularities, showing 
that they have a CM type. Section \ref{mixedhodgeonlink}
contains a description 
of several constructions of the mixed Hodge structures associated
with plane curve singularities. The results mainly follow 
from previous discussions given in \cite{alexhodge} and \cite{jose}.
In section \ref{localalbaneseconstr} 
 we recall the definitions of the local Albanese variety 
following \cite{jose}. Then we introduce plane curve 
singularities of a CM-type as those for which the local 
Albanese varieties will have a CM type. The assertion that 
unibranched singularities have a CM type is proven  in section 
\ref{subsectiononstructure}.

\subsection{Mixed Hodge structures associated with a link.}\label{mixedhodgeonlink}

Let $f(x,y)$ be a germ of a plane curve singularity at the origin $(0,0)$.
Recall (cf.\cite{jose}) the comparison of
the limit of a mixed Hodge structure associated with 
degeneration $f(x,y)=t$ defined in the case of isolated 
singularities of arbitrary dimension in \cite{steenbrinknordic},
and the mixed Hodge structure on the 
cohomology of a punctured neighborhood of the exceptional 
set of a resolution of the singularity of $z^n=f(x,y)$ (or equivalently 
the link of the latter surface singularity) constructed in \cite{durfee}.

Let $V$ be a germ of an algebraic space having an isolated singularity at 
$P \in V \subset \CC^N$. Let $H^*_P(V)$ be the local cohomology of $V$. 
It is shown in \cite{steenbrinkarcata} 
(using a mapping cone construction) that $H^*_P(V)$  
supports a mixed Hodge structure.
The cohomology of the link $L$ of singularity of $V$, i.e. the intersection 
of $V$ with a small sphere in $\CC^N$ centered at $P$, 
is related to the local cohomology as follows:
\begin{equation}
H^*(L)=H^{*+1}_P(V)
\end{equation}
In particular the cohomology 
of $L$ receives a canonical mixed Hodge structure (cf. \cite{steenbrinkarcata}).
On the other hand, $L$ is a retract of a deleted 
neighborhood of the exceptional set of a resolution of singularity 
of $V$, which provides a description of this mixed Hodge structure 
using the presentation:  

\begin{equation}
\tilde V-E=\tilde V \bigcap \bar V-E
\end{equation}
where $\tilde V$ is a resolution of the germ $V$, 
$E$ is the exceptional set of the 
resolution, $\bar V$ is a smooth projective variety containing $\tilde V$.
Here one views $\tilde V$ as a small tubular neighborhood of the 
exceptional set $E$. In particular (cf. \cite{durfee}),
 one has the Mayer-Vietoris 
sequence, which is a sequence of the mixed Hodge structures:

\begin{equation}
    \longrightarrow H^k(\tilde V) \oplus H^k(\bar V-E) \rightarrow  
  H^k(\tilde V-E) \rightarrow H^{k+1}(\bar V) \longrightarrow 
 \end{equation}

The weights on $H^k(\tilde V)=H^k(E)$ (resp. $H^k(\bar V-E)$)
are $0,...,k$, since 
$E$ is a normal crossing divisor, (resp. $k,...,2k$  
since $\bar V-E$ 
is smooth). 
The weight of $H^{k+1}(\bar V)$ is $k+1$ since $\bar V$ is smooth 
projective. 
However the Gabber purity theorem yields that for $0 \le k < n$ the 
weights on $H^k(L)$ are less than or equal to $k$ and for $n \le k \le 2n-1$
are greater or equal than $k+1$ (cf. \cite{durfeehain}).

Applying this to the case when $V$ is the cyclic cover $V_{f,n}$ given by 
$z^n=f(x,y)$ one obtains for its link $L_{f,n}$ 
the mixed Hodge structure with weights
on $H^1(L_{f,n})$ being $0,1$ and weights on $H^2(L_{f,n})$ being $3,4$.

On the other hand, the vanishing cohomology of the family of germs 
$f(x,y)=t$, or equivalently the cohomology of Milnor fiber $M_f$,
supports the limit  mixed Hodge structure $H^1_{lim}(M_f)$
(with weights ($0,1,2$)). 
The following comparison between the mixed Hodge structures on $H^1(M_f)$ and 
$H^2(L_f)$ is given for example in \cite{jose}. 

\begin{prop}\label{comparemhs} Let $f(x,y)$ be a germ 
of a plane curve
(possibly reducible and non-reduced) 
with semi-simple monodromy of order $N$ 
and the Milnor fiber $M_f$. Let $L_{f,N}$ be link 
of the corresponding surface singularity $z^N=f(x,y)$.
Then there is the isomorphism of the mixed Hodge structures:
\begin{equation}\label{lemmaisomorphism}
Gr^W_3H^2(L_{f,N})(1)=Gr_1^WH^1(F_f)
\end{equation}
where the mixed Hodge structure on the left is the Tate twist of 
the mixed Hodge structure constructed in 
\cite{durfee} 
 and the one on the right is the mixed Hodge structure 
on vanishing cohomology constructed in \cite{steenbrinknordic}.
\end{prop}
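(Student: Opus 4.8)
The plan is to compute both sides of (\ref{lemmaisomorphism}) from one embedded resolution of the germ $(C,0)=(\{f=0\},0)\subset\CC^2$, and then to promote the resulting linear isomorphism to one of mixed Hodge structures using strictness. The topological shadow of the statement is classical: via the Milnor fibration of $f$, the $N$-fold cyclic cover $L_{f,N}\to S^3$ branched along the link $L_f$ has unbranched part the mapping torus of $h^N$, which is $M_f\times S^1$ because $h$ is semisimple of order $N$; a Wang-type sequence then expresses $H^2(L_{f,N})$ through $H^1(M_f)$, $H^*(S^1)$ and the branch circles. The work is to see that, after passing to the weight-$3$ graded piece on the left and the weight-$1$ graded piece on the right, this becomes an isomorphism of Hodge structures up to the single Tate twist recorded in (\ref{lemmaisomorphism}).

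Concretely, I would fix an embedded resolution $\rho\colon Y\to B\subset\CC^2$ of $(C,0)$ with $\rho^{*}C=\widetilde C+\sum_i m_iE_i$ a normal crossings divisor, $E=\bigcup_iE_i$ the exceptional set and $\widetilde C$ the (smooth) strict transform meeting $E$ transversally. Normalizing the fiber product $Y\times_B V_{f,N}$, $V_{f,N}=\{z^N=f(x,y)\}$, and resolving the resulting cyclic quotient singularities produces a resolution $\pi\colon\widetilde V\to V_{f,N}$ whose exceptional set $\E\subset\widetilde V$ is a normal crossings configuration of curves; following the Esnault--Viehweg type computation of cyclic cover cohomology, its non-rational components are the curves $\widetilde E_i$ lying over the $E_i$, each a (possibly disconnected) cyclic cover of $E_i\cong\PP^1$ branched over the points $E_i\cap\bigl(\widetilde C\cup\bigcup_{j\ne i}E_j\bigr)$, with branch data---hence genus and Hodge structure---determined solely by the numbers $m_j\bmod N$ and the local resolution combinatorics.

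On the link side I would use the Durfee presentation recalled above, $\widetilde V-\E=\widetilde V\cap(\bar V-\E)$ with $\bar V$ a smooth projective completion, together with the weight spectral sequence of the normal crossings pair $(\bar V,\E)$. The purity statement quoted above pins the weights of $H^2(L_{f,N})=H^2(\widetilde V-\E)$ to $3$ and $4$: the weight-$4$ part is spanned by fundamental classes of the components of $\E$, while $Gr^W_3H^2(L_{f,N})$ is the relevant subquotient of $\bigoplus_i H^1(\widetilde E_i)(-1)$, i.e.\ it is built entirely from the $H^1$ of the curves $\widetilde E_i$. On the Milnor fiber side, Steenbrink's limit mixed Hodge structure on $H^1(M_f)$, computed from the induced family over the $z$-disc via the logarithmic de Rham complex (equivalently via semistable reduction along the same resolution), has $Gr^W_1H^1(M_f)$ equal to the sum of the monodromy eigenspaces with eigenvalue $\ne1$ (the eigenvalue-$1$ part being supported in weights $\ne1$), and this weight-$1$ part splits along the divisors $E_i$ with $m_i\not\equiv0\pmod N$ into exactly the same collection of Hodge structures $\bigoplus_i H^1(\widetilde E_i)$, with the same combinatorial identifications; cf.\ \cite{jose}, \cite{alexhodge}. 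The $\ZZ/N$-action---deck transformations $z\mapsto\exp(2\pi\ii/N)z$ on $L_{f,N}$ and the monodromy $h$ on $M_f$---is respected, and the trivial-character parts vanish on both sides (on the link because $L_{f,N}/(\ZZ/N)=S^3$ has $H^2=0$, on the Milnor fiber because $Gr^W_1$ of the eigenvalue-$1$ part is zero), so it suffices to match the nontrivial isotypic pieces, which are canonically the curves' cohomologies $H^1(\widetilde E_i)$.

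Finally I would upgrade this to an isomorphism of mixed Hodge structures. The underlying rational structures agree, both being $\bigoplus_i H^1(\widetilde E_i;\QQ)$; the weight filtrations agree after the Tate twist in (\ref{lemmaisomorphism}), which records precisely the discrepancy between the ``link'' centering (weight $3$ on $H^2(L_{f,N})$) and the ``vanishing'' centering (weight $1$ on $H^1(M_f)$); and the Hodge filtrations agree because on both sides $F^\bullet$ restricts on each summand to the Hodge filtration of the smooth projective curve $\widetilde E_i$---obtained via residues of logarithmic forms on $\bar V$ in the first case and via Steenbrink's logarithmic de Rham computation in the second---and these two realizations are identified through the common resolution. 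I expect this last point to be the main obstacle: the comparison of the two Hodge filtrations must be made canonical rather than a dimension count, which forces one to track the Tate twists carefully through the Wang-type long exact sequences relating $H^2(L_{f,N})$, $H^2(\bar V-\E)$ and $H^1(M_f)$, and to invoke strictness of the morphisms of mixed Hodge structures in the two weight spectral sequences.
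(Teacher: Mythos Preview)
The paper does not actually prove this proposition: the sentence immediately preceding its statement reads ``The following comparison between the mixed Hodge structures on $H^1(M_f)$ and $H^2(L_f)$ is given for example in \cite{jose}'', and no argument is supplied beyond the background assembled in Section~\ref{mixedhodgeonlink} (Durfee's construction, Steenbrink's limit MHS, and the purity weight bounds). So there is nothing line-by-line to compare your proposal against.

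That said, your outline is correct and is precisely the argument one finds in \cite{jose} and, implicitly, elsewhere in this paper. The link side you describe is exactly what the paper itself carries out later in the proof of Theorem~\ref{productofjacobians}: via $H^2(L_{f,N})\cong H^3_P(V_{f,N})\cong H^3_{\E}(\widetilde V)\cong \mathrm{Hom}(H^1(\E),\QQ(-2))$, together with the fact that the dual graph of $\E$ is a tree when the monodromy has finite order (cf.\ \cite{durfee75}), one obtains $Gr^W_3H^2(L_{f,N})\cong\bigoplus_i H^1(\widetilde E_i)(-1)$. On the Milnor-fiber side, the paper quotes Steenbrink's identification $Gr^W_1H^1(M_f)\cong\bigoplus_i H^1(D_i)$ with $D_i$ the smooth curves appearing in the semistable reduction of $f=t$ (see the proof of the Corollary immediately following the proposition); your observation that this semistable reduction is built from the \emph{same} cyclic cover of the embedded resolution identifies the $D_i$ with your $\widetilde E_i$. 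The concern you flag at the end---that the comparison of Hodge filtrations must be made canonical rather than a dimension count---is exactly the substantive point, and is handled in \cite{jose} and \cite{alexhodge} by tracking residues of logarithmic forms through the common resolution and invoking strictness, just as you propose. There is no gap in your plan.
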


From this Proposition we obtain the following:

\begin{corollary} If the monodromy of $f(x,y)=t$ 
is semisimple then the mixed Hodge structure on 
the Milnor fiber of $f(x,y)$ has type $(1,1),(1,0),(0,1)$.
The mixed Hodge structure on either 
side of (\ref{lemmaisomorphism})
is pure of weight $1$ and polarized. 
\end{corollary}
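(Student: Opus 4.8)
The plan is to extract both assertions directly from Proposition~\ref{comparemhs} together with the weight bounds already assembled in this section. First, recall that the monodromy of $f(x,y)=t$ being semisimple is exactly the hypothesis ``$f$ has semi-simple monodromy of order $N$'' under which Proposition~\ref{comparemhs} applies (with $N$ the order of the monodromy). So I would begin by invoking \eqref{lemmaisomorphism}: the mixed Hodge structure $Gr^W_3 H^2(L_{f,N})(1)$ is isomorphic, as a mixed Hodge structure, to $Gr^W_1 H^1(M_f)$. The left-hand side is, by construction, concentrated in a single weight, hence pure; after the Tate twist by $(1)$ it has weight $1$. Therefore $Gr^W_1 H^1(M_f)$ is pure of weight $1$, and since a graded piece of weight $1$ with its induced Hodge structure has Hodge types among $(1,0)$ and $(0,1)$, both sides of \eqref{lemmaisomorphism} carry only the Hodge types $(1,0)$ and $(0,1)$.

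Next I would pin down the full Hodge type of $H^1(M_f)$. By the discussion preceding Proposition~\ref{comparemhs}, the limit mixed Hodge structure $H^1_{lim}(M_f)$ has weights $0,1,2$. I would rule out weight $0$: the weight-zero part $Gr^W_0 H^1(M_f)$ would contribute a type-$(0,0)$ class, but for the Milnor fiber of a plane curve germ $H^1(M_f)$ carries no $(0,0)$-part --- indeed $Gr^W_0 H^1$ of the vanishing cohomology vanishes because $M_f$ is connected and the reduced cohomology in degree $1$ supports only the two non-trivial weights once the monodromy is semisimple (equivalently, the weight filtration on $H^1_{lim}$ for a curve germ has $Gr^W_0=0$ and, under semisimplicity, the monodromy logarithm $N$ vanishes so the nilpotent orbit is trivial). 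Thus $H^1(M_f)$ has weights $1$ and $2$ only; the weight-$2$ part $Gr^W_2 H^1(M_f)$ is pure of weight $2$ of Hodge--Tate type, i.e.\ of type $(1,1)$, and the weight-$1$ part has types $(1,0),(0,1)$ as established above. This gives exactly the claimed list of Hodge types $(1,1),(1,0),(0,1)$ for the mixed Hodge structure on the Milnor fiber.

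Finally, for the polarization statement: $Gr^W_1 H^1(M_f)$, being the weight-$1$ graded piece of a mixed Hodge structure underlying a geometric situation, is polarizable --- concretely, the cup product pairing on the link (respectively the intersection form on the surface singularity $z^N=f(x,y)$ restricted to the relevant graded piece) induces a polarization on $Gr^W_3 H^2(L_{f,N})(1)$, which under the isomorphism \eqref{lemmaisomorphism} transports to a polarization of $Gr^W_1 H^1(M_f)$; alternatively one invokes the general fact that the weight-graded pieces of any mixed Hodge structure arising from geometry are polarizable pure Hodge structures. I would therefore conclude that both sides of \eqref{lemmaisomorphism} are pure of weight $1$ and polarized.

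The main obstacle is the vanishing of $Gr^W_0 H^1(M_f)$: everything else is a formal consequence of Proposition~\ref{comparemhs} and the weight bounds, but ruling out the weight-$0$ piece requires either a direct appeal to Steenbrink's description of the vanishing cohomology of a curve germ or the observation that semisimplicity of the monodromy forces the nilpotent part $N$ of the log of monodromy to be zero, so the weight filtration on $H^1_{lim}$ collapses to the two weights $1,2$ actually realized; I would cite \cite{steenbrinknordic} for this point rather than reprove it.
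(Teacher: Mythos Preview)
Your argument is correct and tracks the paper's proof closely: both rule out $Gr^W_0$ by appeal to \cite{steenbrinknordic} under the finite-order monodromy hypothesis, then read off the Hodge types $(1,1),(1,0),(0,1)$ from the remaining weights $1,2$. The one genuine difference is the polarization step. You argue abstractly, via the cup product on the link or the general polarizability of graded pieces of geometric mixed Hodge structures. The paper instead uses the concrete identification $Gr^W_1 H^1(M_f)=\bigoplus_i H^1(D_i)$, where the $D_i$ are the smooth projective curves appearing in the semistable reduction of the family $f(x,y)=t$ (again from \cite{steenbrinknordic}); the polarization is then the Riemann form on each $H^1(D_i)$. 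Your route is shorter; the paper's is more explicit and yields a geometric description of the weight-one piece that is useful downstream (e.g.\ in Theorem~\ref{productofjacobians}).

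One caution on your discussion of $Gr^W_0=0$: the heuristic ``$N=0$, so the monodromy weight filtration collapses'' is not quite the right mechanism here, since the Milnor fiber is open and the weight-$2$ part persists regardless of $N$; if the monodromy weight filtration alone governed the weights, semisimplicity would force \emph{everything} into weight $1$. The vanishing of $Gr^W_0$ for finite-order monodromy is a specific feature of Steenbrink's construction (p.~547 of \cite{steenbrinknordic}), and you were right to simply cite it rather than attempt an ad hoc justification.
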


\begin{proof} 
Since the monodromy has a finite order, one has  $Gr^W_0=0$ 
(cf.  \cite{steenbrinknordic} p.547).
Moreover, $Gr^W_1H^1(F)=\oplus H^1(D_i)$ where $D_i$ are smooth curves
appearing in the semistable reduction of the family $f(x,y)=t$.
(ibid). Therefore we obtain 
the polarization of the term on the right hand side of 
(\ref{lemmaisomorphism}).

Note that the action of the monodromy on $Gr^W_2H^1(F_f)$ (resp.
$Gr^W_1H^1(F_f)$) is trivial (resp. does not have $1$ as an eigenvalue).

\end{proof}

\subsection{Local Albanese Variety}\label{localalbaneseconstr}

Given a pure Hodge structure $(H_{\ZZ},F)$ of weight $-1$,
one associates to it a complex torus as follows
(a more general case of mixed Hodge structures 
of type $(0,0),(0,-1),(-1,0),(-1,-1)$ is discussed in \cite{deligneIII}):

\begin{equation}\label{quotientformula}
     A_H=H_{\ZZ} \backslash H_{\CC}/F^0H_{\CC}
\end{equation}

In the case when the Hodge structure is polarized,
$A_H$ is an abelian variety.

\begin{dfn} Local Albanese variety $Alb_f$ of a plane curve singularity 
$f(x,y)=0$ is the abelian variety (\ref{quotientformula}) corresponding 
to the Hodge structure on homology $H_1(M_f,\ZZ)$ of the Milnor fiber 
which is dual to the cohomological mixed Hodge structure 
considered in the proposition \ref{comparemhs}.
\end{dfn}

\subsection{CM-singularities}\label{cmsingsection}
 
Recall that if the monodromy operator acting on the (co)homology of the
Milnor fiber is semisimple then it 
preserves the Hodge filtration (cf. \cite{steenbrinknordic}). 

\begin{dfn}\label{singcmtypedefinition}
 A plane curve 
singularity is called a singularity of CM type if its 
local Albanese variety is isogenous to a product of 
simple abelian varieties of CM type.
\end{dfn}

The local Albanese variety has the monodromy operator of the singularity 
as its automorphism. The following provides a description of the 
eigenvalues of the induced action on its tangent space at identity.

\begin{prop} Let $Alb_f$ be the local Albanese variety of singularity 
$f(x,y)=0$. The eigenvalues of the automorphism 
induced on the tangent space to $Alb_f$  
at identity by the monodromy operator of $f$ 
are the exponents $exp(2 \pi i \alpha)$ 
of the elements $\alpha \in \QQ$ of the spectrum of this singularity 
(cf. \cite{steenbrinknordic})
which satisfy $ 0 < \alpha <1$. 
\end{prop}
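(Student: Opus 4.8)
The plan is to compute the eigenvalues of the monodromy on the tangent space $T_{\mathrm{id}}Alb_f$ directly from the quotient description \eqref{quotientformula}. By definition $Alb_f = H_\ZZ\backslash H_\CC/F^0H_\CC$ for the weight $-1$ Hodge structure $H$ dual to $Gr_1^WH^1(M_f)$, so the tangent space at the identity is canonically $H_\CC/F^0H_\CC$, which (since $H$ has Hodge type $(0,-1),(-1,0)$) is identified with $F^0H_\CC\backslash H_\CC \cong (H^{-1,0})^{*}$ — equivalently, dualizing back, $T_{\mathrm{id}}Alb_f \cong F^1 Gr_1^WH^1(M_f,\CC) = H^{1,0}$ of the weight-one Hodge structure on the Milnor fiber cohomology. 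So the problem reduces to: identify the eigenvalues of the monodromy operator $h$ acting on $H^{1,0}$ of $Gr_1^WH^1(M_f)$.

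First I would invoke Steenbrink's description of the limit mixed Hodge structure on $H^1(M_f)$ for an isolated plane curve singularity with semisimple monodromy (the hypothesis of Proposition \ref{comparemhs} and its Corollary, which we may assume). The Corollary already tells us that $Gr_1^WH^1(M_f)$ is pure of weight one, of Hodge type $(1,0)+(0,1)$, and that $h$ acts on it without eigenvalue $1$. Then I would use the standard compatibility of the monodromy with the $V$-filtration / spectral decomposition: the $e^{-2\pi i\alpha}$-eigenspace of $h$ on $H^{1}(M_f)$ sits in the Hodge filtration step determined by $\lfloor\alpha\rfloor$-shift, and the piece lying in $F^p$ but not $F^{p+1}$ corresponds precisely to spectral numbers $\alpha$ with $\lceil\alpha\rceil = p+1$ (Varchenko/Steenbrink normalization). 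For a plane curve (surface of the map in two variables), the spectral numbers lie in the interval $(0,2)$, and those contributing to the $H^{1,0}$-part of the weight-one quotient are exactly the $\alpha$ with $0<\alpha<1$: these are the ones lying in $F^1$, with $h$-eigenvalue $e^{-2\pi i\alpha}$ on the corresponding graded piece, while spectral numbers with $1<\alpha<2$ give the complementary $(0,1)$-part and $\alpha=1$ (eigenvalue $1$) contributes only to $Gr_2^W$, which is killed. Matching conventions for the direction of the monodromy (the dual Hodge structure on homology reverses it), the eigenvalues on $T_{\mathrm{id}}Alb_f$ come out as $e^{2\pi i\alpha}$ for $0<\alpha<1$ in the spectrum, as claimed.

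The main obstacle I anticipate is bookkeeping the sign and normalization conventions so that they are mutually consistent: there are three places where a choice of convention enters — Steenbrink's definition of the spectrum (whether $\alpha$ or $n-\alpha$, and whether eigenvalue is $e^{2\pi i\alpha}$ or $e^{-2\pi i\alpha}$), the passage from $H^1(M_f)$ to the dual Hodge structure on $H_1(M_f)$ used to define $Alb_f$, and the identification of $T_{\mathrm{id}}$ of the torus $H_\ZZ\backslash H_\CC/F^0$ with $H_\CC/F^0$. The cleanest route is to fix Steenbrink's convention from \cite{steenbrinknordic} (spectrum in $(0,n+1)$ for an $n$-dimensional singularity, so $(0,2)$ here, with the $e^{-2\pi i\alpha}$-eigenspace of the monodromy meeting $Gr_F^{p}$ for $p=\lfloor n-\alpha\rfloor$, i.e. $p=1$ exactly when $0<\alpha<1$), then note that dualizing to homology inverts $h$ so that the $e^{-2\pi i\alpha}$-eigenvalue on cohomology becomes $e^{2\pi i\alpha}$ on the tangent space of $Alb_f$. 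Once the conventions are pinned down, the statement is essentially a reformulation of Steenbrink's computation of $Gr_F^\bullet H^1_{\lim}$ in terms of the spectrum, restricted to the weight-one part, together with the Corollary above that identifies the relevant graded piece and rules out the eigenvalue $1$.
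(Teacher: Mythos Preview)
Your approach is essentially the same as the paper's, only much more expanded. The paper's entire proof is two sentences: ``Indeed the above tangent space can be identified with $Gr^0_FH^1(M_f)$ and the claim follows from the definition of the spectrum of singularity.'' You are doing the same identification of the tangent space with a graded piece of the Hodge filtration on the Milnor-fiber cohomology and then invoking Steenbrink's definition of the spectrum; the only difference is that you route the identification through the dual Hodge structure on $H_1$ and land on $F^1=H^{1,0}$ (or its dual), whereas the paper names the complementary piece $Gr^0_F=H^{0,1}$ directly. These two identifications are exchanged by the polarization pairing, so they give the same answer up to the very sign convention you discuss.

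Two small bookkeeping remarks. First, in your chain $H_\CC/F^0H_\CC \cong (H^{-1,0})^{*}$ the dual is spurious: for $H$ of type $(-1,0)+(0,-1)$ one has $F^0H=H^{0,-1}$ and hence $H/F^0 \cong H^{-1,0}$ itself; the dual appears only when you pass back to $H^1$, since $(H_1)^{-1,0}=\big((H^1)^{1,0}\big)^*$. Second, the phrase ``dualizing to homology inverts $h$'' is not the right mechanism: the monodromy on $H_1$ is the transpose of that on $H^1$, which preserves eigenvalues. The sign flip you want comes instead from the passage between $H^{1,0}$ and $H^{0,1}$ (via complex conjugation or the polarization), together with the fact that the eigenvalues are roots of unity so $\bar\lambda=\lambda^{-1}$. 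This does not affect your conclusion, and your instinct that the only genuine content here is pinning down a consistent set of conventions is exactly right --- the paper simply declines to do that bookkeeping.
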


\begin{proof} Indeed the above tangent space can be identified with 
$Gr^0_FH^1(M_f)$ and the claim follows from the definition of the spectrum 
of singularity.
\end{proof}

For unibranched singularities of plane curves the spectrum 
was calculated in \cite{saito}. 

\begin{example}\label{pqsing}
For unibranched curve singularities with one characteristic pair, i.e. 
singularities with links equivalent to the links of singularity
$x^p=y^q$ where $gcd(p,q)=1$, 
the number of eigenvalues of the monodromy acting on 
$Gr^0_FH^1(M)$ ($M$ is the Milnor fiber)
is equal to ${{(p-1)(q-1)} \over 2}$. 
More precisely, the action on $H_1(M)$ is semi-simple and 
has as the characteristic polynomial (cf. e.g. \cite{Sumners})
\begin{equation}\label{cusppolyn}
\Delta_{p,q}={{(t^{pq}-1)(t-1)} \over {(t^p-1)(t^q-1)}}
\end{equation}
The characteristic polynomial of the action on $Gr^0_FH^1(M)$
is 

\begin{equation}
  \Pi (t-exp(-2 \pi \sqrt{-1} \alpha)),
\end{equation}
where
\begin{equation}\label{cmtype}  
\alpha={i \over p}+{j \over q} \ \ \ \ , 0 < \alpha <1, 0 <i<p, 0<j<q
\end{equation}
(cf. \cite{saito}  and references there).
In particular for $f(x,y)=x^2+y^3$ the only eigenvalue 
on $F^0$ is $exp({{2 \pi \sqrt{-1}} \over 6})$. More generally, 
for the singularity $x^2+y^p$ where $p$ is an odd prime, 
the field generated by the roots of (\ref{cusppolyn}) 
is $\QQ(\zeta_p)$ and the CM type corresponds to subset 
set $exp({{2 \pi \sqrt{-1} j}\over p})$ where ${1 \over 2}+{j \over p}<1$ 
i.e. coincides with the CM type discussed in example \ref{primitivity}. 
\end{example}

\begin{theorem}\label{cmsing}
Let $f(x,y)$ be a germ of a plane curve singularity 
such that the monodromy $T_f$ on $H^1(M_f,\CC)_{\ne 1}=
H^1(M_f)/Ker(T_f-Id)$
is semisimple. 
If the characteristic polynomial $\Delta_f(t)$ does not have multiple roots
different from 1 and $\Delta_f(-1) \ne 0$, 
then the singularity $f(x,y)$ has CM-type.
\end{theorem}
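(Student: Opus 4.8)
The plan is to exhibit, inside $End(Alb_f)\otimes\QQ$, an \'etale $\QQ$-subalgebra which is a product of cyclotomic fields and which has the maximal possible rank $2\dim Alb_f$, and then to split $Alb_f$ accordingly by means of the standard theory of complex multiplication. First I would record what object we are dealing with: by the definition of the local Albanese variety, together with Proposition \ref{comparemhs} and the corollary following it, $Alb_f$ is the abelian variety attached by (\ref{quotientformula}) to the dual of the polarized pure Hodge structure of weight one
\[
V:=Gr^W_1H^1(M_f,\QQ),
\]
so that $\dim Alb_f=\frac12\dim_\QQ V$. The monodromy $T:=T_f$ preserves both the lattice $H_1(M_f,\ZZ)$ and the Hodge filtration, hence it defines an automorphism of $Alb_f$ and an embedding $\QQ[T]\hookrightarrow End(Alb_f)\otimes\QQ$.

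Next I would use the three hypotheses to pin down the action of $T$ on $V$. Since $Gr^W_1$ carries no eigenvalue $1$, $V$ is a subquotient of $H^1(M_f)_{\ne 1}$, so the semisimplicity hypothesis forces $T|_V$ to be semisimple; its characteristic polynomial $g(t)$ divides $\Delta_f(t)$, has no root at $1$, and, by the assumptions that $\Delta_f$ has no multiple root different from $1$ and that $\Delta_f(-1)\ne 0$, is squarefree and coprime to $t+1$. As $T$ is quasi-unipotent its eigenvalues are roots of unity, whence
\[
g(t)=\prod_{i=1}^{k}\Phi_{d_i}(t),\qquad d_1,\dots,d_k\ \text{pairwise distinct},\ \ d_i\ge 3 .
\]
Consequently $\QQ[T]\cong\QQ[t]/(g)\cong\prod_{i=1}^{k}\QQ(\zeta_{d_i})$ is an \'etale $\QQ$-algebra which is a product of cyclotomic fields, of rank $\sum_i\phi(d_i)=\deg g=\dim_\QQ V=2\dim Alb_f$, and it sits inside $End(Alb_f)\otimes\QQ$.

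Now I would decompose. Writing $e_1,\dots,e_k$ for the idempotents of $\QQ[T]$, one gets $V=\bigoplus_i V_i$ with $V_i:=e_iV$ a weight-one sub-Hodge structure carrying an action of $\QQ(\zeta_{d_i})$ and inheriting the polarization (cyclotomic polynomials are palindromic and the polarization is $T$-invariant), and with $\dim_\QQ V_i=\phi(d_i)$, so that $V_i$ is one-dimensional over $\QQ(\zeta_{d_i})$. Correspondingly $Alb_f$ is isogenous to $\prod_i A_i$, where $A_i$ is the abelian variety attached by (\ref{quotientformula}) to $V_i$, with $\dim A_i=\frac12\phi(d_i)=\frac12[\QQ(\zeta_{d_i}):\QQ]$ and $\QQ(\zeta_{d_i})\hookrightarrow End(A_i)\otimes\QQ$. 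The Hodge decomposition $V_i\otimes_\QQ\CC=V_i^{+}\oplus V_i^{-}$, being stable under $\QQ(\zeta_{d_i})$ and satisfying $\overline{V_i^{+}}=V_i^{-}$, singles out under the identification $V_i\otimes_\QQ\CC\cong\bigoplus_\sigma\CC_\sigma$ a subset $\Phi_i\subset Hom(\QQ(\zeta_{d_i}),\CC)$ disjoint from its complex conjugate, i.e. a CM type of $\QQ(\zeta_{d_i})$ (here the inequality $d_i\ge 3$, which makes $\QQ(\zeta_{d_i})$ totally imaginary, is used). Thus $A_i$ is, up to isogeny, the CM abelian variety attached to $(\QQ(\zeta_{d_i}),\Phi_i)$; by the standard theory of complex multiplication (cf. \cite{shimura}, \cite{mum}, \cite{milne}), $A_i$ is isogenous to a power of the simple abelian variety of CM type attached to the primitive CM type underlying $(\QQ(\zeta_{d_i}),\Phi_i)$. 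Hence $Alb_f$ is isogenous to a product of simple abelian varieties of CM type, and by Definition \ref{singcmtypedefinition} the singularity $f(x,y)$ has CM type; the case $V=0$ (where $Alb_f=0$) is trivial.

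The main obstacle is the second step: squeezing out of the three hypotheses the precise assertion that $T$ acts \emph{semisimply}, with \emph{squarefree} characteristic polynomial free of the eigenvalues $1$ and $-1$, on the weight-one Hodge structure $V$ underlying $Alb_f$ — rather than on all of $H^1(M_f)$, where none of these need hold. Once this normalization is secured, the rest is the formal dictionary between polarized weight $-1$ Hodge structures equipped with an action of a product of cyclotomic fields and CM abelian varieties, together with the well-known reduction of a CM abelian variety to a product of simple CM factors.
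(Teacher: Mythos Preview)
Your proposal is correct and follows essentially the same route as the paper's proof: exhibit $\QQ[T_f]\cong\prod_i\QQ(\zeta_{d_i})$ as an \'etale subalgebra of $End(Alb_f)\otimes\QQ$ of rank $2\dim Alb_f$, using the three hypotheses exactly as you do to guarantee $d_i\ge 3$ and squarefreeness, and then split $Alb_f$ accordingly. The only cosmetic difference is that the paper packages the decomposition and the CM verification by citing the isogeny decomposition of Birkenhake--Gonz\'alez-Aguilera--Lange and Milne's numerical criterion for complex multiplication, whereas you carry out the idempotent splitting and the identification of the CM type on each factor by hand.
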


\begin{proof} Let $T_f$ denote linear operator induced by the monodromy $f$
of $Gr^W_1H^1(F_f)$.
Since the monodromy $T_f$ of the Milnor fiber is semisimple, 
as was mentioned earlier, it preserves the Hodge filtration, 
acts on $Alb_f$ and hence the algebra $End^{\circ}(Alb_f)=End(Alb_f) \otimes \QQ$
contains the algebra $\QQ[T_f]$. 
The latter has the dimension 
equal to the degree of the minimal polynomial of $T_f$ which 
is equal to $dim H^1(M_f,\CC)_{\ne 1}$
as follows from the assumption that the multiple eigenvalues
of the monodromy different from 1 are absent.

Next consider, the 
isogeny decomposition of $Alb_f$ into the product of abelian varieties 
$X_i$ on which appropriate powers $T_f^{d_i}$ have as eigenvalues only primitive 
roots of unity (cf. theorem 2.1 in \cite{lange}). Since $d_i \ge 3$ 
due to assumption $\Delta_f(-1) \ne 0$, each $X_i$ 
has sufficiently many complex multiplications, i.e. $End^{\circ} X_i$ contains 
semi-simple commutative algebra of rank $2dim X_i$. 
This subalgebra coincides with the center of $End^{\circ}(X_i)$. 
This implies the second inequality in:
\begin{equation}\label{albendineq}
      2 dim Alb_f =rk H_1(M_f,\CC)_{\ne 1} \le [End^{\circ}(Alb_f):\QQ]_{red}
\end{equation}
(notations as in \cite{milne} p.10 
\footnote{i.e. $[\Pi B_i:k]_{red}=\sum [B_i:k_i]^{1 \over 2}[k_i:k]$
for a product of simple algebras $B_i$ over $k$, 
with respective centers $k_i$.})
while the purity of Hodge structure 
on $H^1(M_f)_{\ne 1}$ implies the first equality.

Now, the Proposition 3.1 in \cite{milne} yields that in fact one has an 
equality 
in (\ref{albendineq}) 
and the claim follows (cf. for example def. 3.2 \cite{milne}).

Alternatively, one can use the Theorem 3.2 in \cite{lange} (cf. also \cite{agu})
to see that isogeny components $X_i$ have CM type 
(again using absence of multiple eignevalues of the monodromy 
acting on $H^1(M_f,\CC)_{\ne 1}$) 
and 
hence to obtain the conclusion of the Theorem \ref{cmsing}.
\end{proof}

\begin{example} Simple singularities have CM type (cf. \cite{jose}).
 Indeed, the characteristic
polynomials of the monodromy of Milnor fiber of simple singularities 
are readily available.
For singularity $A_{2k}$, it is equal to ${{(t^{2k}+1)(t-1)} \over {t^2-1}}$

For singularity $x^2y+y^{n-1}$ of type $D_n$ it is equal to 
\begin{equation}
\Delta(t)=(t^{n-1}+(-1)^{n-1})(t-1)
\end{equation}

For singularities $E_6,E_8$ i.e. $y^3+z^4,y^3+z^5$, 
the characteristic polynomials of monodromy are given in example \ref{pqsing}
and for $E_7$ i.e. $yz^3+y^3$ the characteristic polynomial is equal to $t^7-1$.
\end{example}

\begin{example} Consider singularity $f(x,y)=\Pi_{i=1}^{i=4}(x-\alpha_iy)=0$
where $\alpha_i$ are generic complex numbers. 
The characteristic
polynomial of the monodromy is $(t-1)^3(t^2+1)^2(t+1)^2$ and so Theorem
\ref{cmsing} cannot be applied. In fact the local Albanese coincides 
with the Jacobian of the only exceptional curve of the resolution 
of singularity $z^4=f(x,y)$. This curve is a 4-fold cyclic 
cover of $\PP^1$ totally ramified at 4 points. It cannot 
generically have CM type 
since the Jacobian of such curve surjects onto a 2-fold 
cover of $\PP^1$ branched at 4 points which hence represents a generic 
elliptic curve. 
\end{example}
\begin{example} Consider the singularity of plane curve with the 
Puiseux expansion:
\begin{equation}
x^{3 \over 2}+x^{{21} \over {10}}=x^{3 \over 2}+x^{{3 \over 2}+{{6} \over {2 \cdot 5}}}
\end{equation}
\end{example}
The Puiseux pairs are $(k_1,n_1)=(3,2),(k_2,n_2)=(6,5)$ 
which yields corresponding data $w_1=3, w_2=w_1n_1n_2+k_2=36$ (cf. \cite{saito})
and hence the 
characteristic polynomial of the monodromy of this singularity is
($\Delta_{p,q}$ is given by (\ref{cusppolyn}))
\begin{equation}
\Delta(t)=\Delta_{3,2}(t^5)\Delta_{36,5}(t)
\end{equation}
(cf. \cite{wall} for formulas for the characteristic polynomial 
of the monodromy in terms of Puiseux expansion), i.e. 
\begin{equation}\label{charpolmultiple}
\Delta(t)=[t^{10}-t^5+1][{{(t^{36 \cdot 5}-1)(t-1)} \over {(t^{36}-1)(t^5-1)}}]
\end{equation}
Since the cyclotomic polynomial of degree 10 divides the polynomials
in both brackets in (\ref{charpolmultiple}), $\Delta(t)$ 
has multiple roots. Nevertheless the local Albanese for this singularity 
is abelian variety of CM type (cf. Theorem \ref{localalbanesemain}).

\subsection{Structure of a local Albanese variety 
of singularities of CM type}\label{subsectiononstructure}

\begin{theorem}\label{productofjacobians}
Let $f(x,y)=0$ be a singularity with a semi-simple 
monodromy and let $N$ be the order of the monodromy operator.
The Albanese variety of germ $f(x,y)=0$ 
is isogenous to a product of Jacobians of the exceptional 
curves of positive genus for a resolution of: 
\begin{equation}\label{cycliccoversing}
z^N=f(x,y)
\end{equation}
\end{theorem}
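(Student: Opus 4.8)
The plan is to exploit the mixed Hodge-theoretic comparisons established in Section \ref{mixedhodgeonlink} together with the geometry of an embedded resolution of the surface singularity $z^N = f(x,y)$. First I would take a good resolution $\widetilde V \to V$ of the surface singularity $V = V_{f,N}$, with exceptional divisor $E = \bigcup_j E_j$ a normal crossing divisor. By Proposition \ref{comparemhs} and its Corollary, the local Albanese variety $Alb_f$ is, up to isogeny, the abelian variety attached to the polarized weight-one Hodge structure $Gr^W_3 H^2(L_{f,N})(1) = Gr^W_1 H^1(M_f)$, where $L_{f,N}$ is the link of $V$ and $M_f$ the Milnor fiber (here I use that semisimplicity of the monodromy forces $Gr^W_0 H^1(M_f) = 0$, so the \emph{whole} weight-one Hodge structure of $Alb_f$ is accounted for by $Gr^W_1$).

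Next I would compute $Gr^W_3 H^2(L_{f,N})$ from the Mayer--Vietoris sequence of mixed Hodge structures displayed in Section \ref{mixedhodgeonlink}, namely
\begin{equation}
\cdots \to H^k(\widetilde V)\oplus H^k(\bar V - E) \to H^k(\widetilde V - E) \to H^{k+1}(\bar V) \to \cdots,
\end{equation}
identifying $\widetilde V - E$ with a deleted tubular neighborhood of $E$, hence with (a retract of) $L_{f,N}$. The point is the standard weight analysis used there: $H^k(\widetilde V) = H^k(E)$ has weights $\le k$ because $E$ is normal crossing, $H^k(\bar V - E)$ has weights $\ge k$ because $\bar V - E$ is smooth, and $\bar V$ being smooth projective, $H^{k+1}(\bar V)$ is pure of weight $k+1$. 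Taking $k=2$ and passing to $Gr^W_3$, the contribution of $H^2(\widetilde V - E) = H^2(L_{f,N})$ in weight $3$ is controlled by the weight-$3$ part of $H^3(\bar V)$ mapping in, but by Gabber purity (via \cite{durfeehain}) the only pieces of $H^\bullet(L_{f,N})$ in the relevant range are $H^1(L_{f,N})$ in weights $0,1$ and $H^2(L_{f,N})$ in weights $3,4$. A careful bookkeeping with the normalization exact sequence for the normal crossing divisor $E$ then shows $Gr^W_3 H^2(L_{f,N})$ is built out of the $H^1$ of the components $E_j$ — the curves appearing in the exceptional set — with the weight-$3$ part being precisely $\bigoplus_j H^1(E_j)(-1)$ modulo the images coming from intersections (which, being $H^0$ or $H^2$ of curves, are pure of even weight and thus do not meet $Gr^W_1$ after the Tate twist). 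This is exactly the statement of \cite{durfee}: the weight-one part of the link is $\bigoplus_j H^1(E_j)$, where the sum may be taken over exceptional curves of positive genus, since rational $E_j$ contribute nothing. Translating through (\ref{quotientformula}), $Alb_f$ is isogenous to $\prod_j J(E_j)$, the product of Jacobians of the positive-genus exceptional curves.

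The main obstacle I anticipate is not the construction of the weight filtration — that is essentially quoted from \cite{durfee} and \cite{steenbrinkarcata} — but rather being careful that the \emph{whole} of $Alb_f$ (not merely a summand) is captured, i.e.\ that there is no extra weight-one contribution to $H^2(L_{f,N})$ beyond $\bigoplus_j H^1(E_j)$ and, dually, that one has not lost a quotient. Here the hypothesis that the monodromy is semisimple is essential: it kills $Gr^W_0 H^1(M_f)$ and, on the link side, ensures the Hodge structure on $Gr^W_3 H^2(L_{f,N})$ is pure and polarized (so that the torus (\ref{quotientformula}) really is an abelian variety), and it guarantees that $Gr^W_2 H^1(M_f)$ carries a trivial monodromy action — the part that would otherwise have contributed the "non-abelian" piece. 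One also needs the elementary observation that isogeny, not isomorphism, is all that is claimed, which lets one discard the contributions of intersection strata and of rational components freely. Thus the proof reduces to: (i) quote Proposition \ref{comparemhs}; (ii) run the Mayer--Vietoris weight argument to identify $Gr^W_3 H^2(L_{f,N}) \otimes \QQ \cong \bigoplus_{g(E_j) > 0} H^1(E_j,\QQ)(-1)$; (iii) apply (\ref{quotientformula}) functorially to conclude $Alb_f \sim \prod_{g(E_j) > 0} J(E_j)$.
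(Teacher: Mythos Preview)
Your proposal is correct and reaches the same conclusion, but the route is genuinely different from the paper's. The paper does not run the global Mayer--Vietoris sequence involving a compactification $\bar V$; instead it works with the local cohomology exact sequence from \cite{steenbrinkarcata},
\[
0 \to H^3_P(X) \to H^3_E(\tilde X) \to H^3(E) \to 0,
\]
together with the duality $H^3_E(\tilde X)\cong \mathrm{Hom}(H^1(E),\QQ(-2))$ and the identification $H^2(L)=H^3_P(X)$. The paper then invokes \cite{durfee75}: semisimplicity of the monodromy of $f$ forces the dual graph of the resolution of $z^N=f(x,y)$ to be a tree, so $H^1(E)=\bigoplus_i H^1(E_i)$ on the nose (no weight filtering needed), and hence $H^2(L)\cong\bigoplus_i \mathrm{Hom}(H^1(E_i),\QQ(-2))$ is already pure of weight $3$.

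Your argument trades the tree property for the weight filtration: you never use that the dual graph has no cycles, and instead extract $Gr^W_3 H^2(L_{f,N})\cong\bigoplus_j H^1(E_j)(-1)$ directly. This is conceptually nice---the cycle contribution sits in $Gr^W_4$ and is discarded automatically---but the Mayer--Vietoris computation you sketch (with the compactification $\bar V$) is not the efficient way to see it; the weight-$3$ piece of $H^2(\bar V - E)$ and the map to $H^3(\bar V)$ both involve the global geometry of $\bar V$, and unwinding them brings you back to the same residue/Gysin identifications that the local-cohomology route handles in one line. In practice you are black-boxing the identification by citing \cite{durfee}, which is fine, but the paper's direct computation via \cite{steenbrinkarcata} is shorter and yields the slightly stronger statement that $H^2(L_{f,N})$ is pure of weight $3$ (not just that its $Gr^W_3$ is what one wants).
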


\begin{proof} Denote by $P$ the isolated singularity of a germ 
$X$ of the surface (\ref{cycliccoversing})
and consider a resolution  $\tilde X \rightarrow X$ of $X$.
The dual graph of such a resolution does not contain cycles 
(cf. \cite{durfee75}) since the monodromy is assumed to be semi-simple
\footnote{Note that without the assumption that the surface singularity
has form (\ref{cycliccoversing}), the finiteness of the order 
of monodromy is not sufficient to conclude the absense of cycles
 (cf. \cite{artal}).}. Let $E=\cup E_i$ be the decomposition of the 
exceptional set of the resolution of (\ref{cycliccoversing}) 
into irreducible components.
We shall use the identification $H^2(L)=H^3_P(X)$ 
and the exact sequence (cf. \cite[Corollary (1.12)]{steenbrinkarcata}
of mixed Hodge structures on local cohomology:
\begin{equation}
  0 \rightarrow H^3_P(X) \rightarrow H^3_E(\tilde X) \rightarrow H^3(E)
\rightarrow 0
\end{equation}
The last term is trivial, i.e. one has the identification of the first two.
Moreover, one has the duality isomorphism (cf. \cite[(1.6)]{steenbrinkarcata}):
\begin{equation} 
H^3_E(\tilde X)=Hom(H^1(E),\QQ(-2))
\end{equation}
Since $H^1(E)=\oplus_i H^1(E_i)$,
we infer the isomorphism of Hodge structures:
\begin{equation}
   H^3(L)=\oplus Hom(H^1(E_i),\QQ(-2))
\end{equation}
The claim follows since for the curves $E_i$ 
having positive genus the Jacobians and  corresponding 
Albanese varieties are isomorphic.
\end{proof}

\begin{theorem}\label{localalbanesemain} 

Unibranched plane curve singularities have CM type. 

\end{theorem}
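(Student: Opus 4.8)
The plan is to combine Theorem \ref{productofjacobians} with an explicit understanding of the exceptional curves appearing in a resolution of the surface singularity $z^N = f(x,y)$ when $f$ is unibranched. By Theorem \ref{productofjacobians}, $Alb_f$ is isogenous to a product of Jacobians $J(E_i)$ of the positive-genus exceptional curves $E_i$ of such a resolution, so it suffices to show each $J(E_i)$ is isogenous to a product of simple abelian varieties of CM type. First I would take an embedded resolution of the plane curve germ $f=0$ and use it to build a good (e.g. equivariant, or via the Hirzebruch--Jung type) resolution of $z^N=f(x,y)$; the exceptional curves of positive genus are then, up to normalization, cyclic covers of $\PP^1$ branched over a small number of points (the points where the strict transform or other exceptional components meet a given component of the resolution of the base). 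Since $f$ is unibranched with one Puiseux place at a time, at each relevant level of the resolution one sees a component which is a $\ZZ/m$-cyclic cover of $\PP^1$ branched at very few points — in the one-characteristic-pair case (Example \ref{pqsing}) literally a cover $w^m = \prod (t-a_j)^{b_j}$ with three or four branch points.

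The key number-theoretic input is that a cyclic cover of $\PP^1$ branched at three points — a superelliptic curve $w^m = t^a(1-t)^b$, i.e. a quotient of a Fermat curve — has Jacobian isogenous to a product of abelian varieties of CM type, with complex multiplication by cyclotomic fields; this is classical (Fermat Jacobians, Weil, Rohrlich, Koblitz--Rohrlich) and the decomposition is governed exactly by the Galois action of $(\ZZ/m)^\times$ on the eigenspaces indexed by characters, which is precisely the mechanism behind Theorem \ref{cmsing} and Theorem 2.1/3.2 of \cite{lange} already invoked in the paper. So the second step is: reduce the analysis of each $E_i$ to the three-branch-point case. For a unibranched singularity this should follow because the Puiseux expansion is a single chain — each exceptional component of the base resolution meets at most the strict transform and two neighboring exceptional components, and after passing to the cyclic cover $z^N=f$ the induced cover of that $\PP^1$ is branched only at those (at most three, possibly four) points, with the branching orders dictated by the Puiseux pairs $(k_i,n_i)$ and the weights $w_i$ recorded in \cite{saito}. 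Where four branch points genuinely occur one must check the extra structure forces CM anyway; I would handle this by observing that the character decomposition still splits the Jacobian into pieces on which a power of the covering automorphism acts with primitive roots of unity as eigenvalues, and then apply the same argument as in the proof of Theorem \ref{cmsing} (the pieces are CM because they have enough complex multiplication, the relevant roots of unity having order $\ge 3$).

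The main obstacle I anticipate is the bookkeeping at the branch points with multiplicities: after taking the $N$-fold cover $z^N=f$, the component lying over a given $\PP^1$ in the base resolution is a cover $w^m=\prod_j (t-a_j)^{e_j}$ where the exponents $e_j$ and the degree $m$ are determined by local intersection data and the semistable reduction, and one must verify that even when $\gcd(m, e_j)>1$ (so the curve is disconnected or the cover is imprimitive) each connected component is still a cyclic cover of $\PP^1$ branched at three points — hence a quotient of a Fermat curve, hence CM. Concretely, a cyclic cover $w^m=t^{e_1}(1-t)^{e_2}$ is always birational to some $w'^{m'}=t^{e_1'}(1-t)^{e_2'}$ with $\gcd(m',e_1',e_2',m'-e_1'-e_2')=1$, and that curve has CM Jacobian with cyclotomic endomorphism algebra by the classical theory. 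So the proof structure is: (1) cite Theorem \ref{productofjacobians} to reduce to the exceptional curves $E_i$; (2) show, using the chain structure of an embedded resolution of a unibranched plane curve and the explicit resolution of $z^N=f$, that each positive-genus $E_i$ is (a normalization of) a cyclic cover of $\PP^1$ branched at three points; (3) invoke the CM property of such Fermat-quotient curves, noting that their endomorphism algebras are products of cyclotomic fields; (4) conclude via Definition \ref{singcmtypedefinition}. The delicate point, and the one I'd spell out carefully rather than wave at, is step (2): producing the three branch points and confirming no fourth one sneaks in for a unibranched germ, which is exactly where the ``one branch'' hypothesis is used and where the contrast with the ordinary-multiple-point examples (which have four or more) shows up.
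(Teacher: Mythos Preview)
Your strategy is the paper's strategy: invoke Theorem \ref{productofjacobians}, prove that every positive-genus exceptional curve in a resolution of $z^N=f(x,y)$ is a cyclic cover of $\PP^1$ branched at at most three points (what the paper calls a Belyi cyclic cover, Lemma \ref{belyitype}), and then cite the CM property of Fermat-curve quotients (Lemma \ref{belyijacobian}). Your final paragraph identifies the correct crux and the correct mechanism: for a unibranched germ the embedded resolution is obtained by successive blow-ups along a single branch, so each exceptional component meets at most two other exceptional components plus the strict transform, giving valency $\le 3$ at every vertex of the dual graph; this is exactly the inductive argument the paper runs.

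One correction worth making before you write this up: drop the four-branch-point contingency entirely. It is not merely unnecessary, it is wrong. A cyclic cover of $\PP^1$ branched at four points generically has non-CM Jacobian --- this is precisely the paper's counterexample following Theorem \ref{cmsing}, the ordinary quadruple point $\prod_{i=1}^4(x-\alpha_i y)$, whose local Albanese surjects onto a generic elliptic curve. Your proposed fallback (``the character decomposition still splits the Jacobian into pieces on which a power of the covering automorphism acts with primitive roots of unity'') fails because with four branch points the eigenspaces of the deck transformation on $H^{1,0}$ typically have dimension $>1$, so the minimal polynomial has degree strictly less than $\dim H^1$ and the inequality (\ref{albendineq}) in the proof of Theorem \ref{cmsing} is strict. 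So the unibranched hypothesis is doing essential work at exactly the point you flagged: commit to proving the three-branch bound outright, and do not offer a fallback that does not exist.
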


The proof of theorem \ref{localalbanesemain}
will consist of two steps. First, we shall 
show that for $f(x,y)$ unibranched 
all exceptional curves in a resolution of 
singularity (\ref{cycliccoversing}) 
are Belyi cyclic covers in the following sense:
\begin{dfn} A Belyi cyclic cover is a cyclic 
cover of $\PP^1$ branched at at most three points.
\end{dfn}
\noindent Secondly we shall use  the following
(cf. \cite{gross},\cite{koblitz}):
\begin{lemma}\label{belyijacobian} The Jacobian of a Belyi cyclic  cover is an 
abelian variety of CM type.
\end{lemma}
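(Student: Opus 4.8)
The plan is to prove Lemma~\ref{belyijacobian} (the Jacobian of a Belyi cyclic cover is an abelian variety of CM type) by exhibiting an explicit large commutative subalgebra of its endomorphism algebra coming from the deck transformations, and then showing it has the right rank. Concretely, let $C \to \PP^1$ be a cyclic cover of degree $n$ branched over at most three points, so $C$ is (the smooth model of) a curve $y^n = x^a(x-1)^b$ with $\gcd(n,a,b,a+b)$ handled appropriately. The group $\mu_n$ of deck transformations acts on $C$, hence on $J(C)$, giving a ring homomorphism $\ZZ[\mu_n] \cong \ZZ[t]/(t^n-1) \to \mathrm{End}(J(C))$, and after $\otimes \QQ$ we get $\QQ[\mu_n] \hookrightarrow \mathrm{End}^\circ(J(C))$. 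The action of $\mu_n$ on $H^0(C,\Omega^1)$ decomposes into characters $\chi^j$, and the classical Chevalley--Weil / Hurwitz computation for superelliptic curves shows each character $\chi^j$ ($1 \le j \le n-1$, $j$ with $\gcd$ conditions) occurs with multiplicity exactly $1$ in $H^0(C,\Omega^1)$ for a Belyi cover branched at three points (this is precisely what forces ``at most three branch points'': with four or more branch points multiplicities jump above $1$ and the CM property can fail, as the paper's four-lines example illustrates).

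From the multiplicity-one statement the CM property is essentially immediate. First I would split off the fixed part: if $1$ occurs among the characters on $\Omega^1$ one passes to the quotient by the subcover, so without loss of generality assume $J(C)$ is $\mu_n$-isotypic in the sense that every character occurring on $H^0(C,\Omega^1)$ is nontrivial and occurs once. Then I would invoke the isogeny decomposition of $J(C)$ according to the idempotents of $\QQ[\mu_n] \cong \prod_{d \mid n} \QQ(\zeta_d)$: writing $J(C) \sim \prod_d B_d$ where the field $\QQ(\zeta_d)$ acts on $B_d$, the multiplicity-one property says $\dim_\QQ H^0(B_d,\Omega^1)$-worth of characters of order exactly $d$ each appear once, which forces $2\dim B_d = \phi(d) \cdot (\text{number of } \QQ(\zeta_d)\text{-factors}) = [\QQ(\zeta_d):\QQ] \cdot r_d$, i.e.\ $\QQ(\zeta_d)^{r_d} \subset \mathrm{End}^\circ(B_d)$ has rank exactly $2\dim B_d$. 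That is the definition of sufficiently many complex multiplications, so each $B_d$, hence $J(C)$, is of CM type with endomorphism algebra a product of cyclotomic fields. This also matches the conclusion already recorded in Theorem~\ref{mainresultintro}.

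The key steps in order: (1) realise the Belyi cyclic cover in superelliptic normal form and identify $\mu_n \subset \mathrm{Aut}(C)$, inducing $\QQ[\mu_n] \hookrightarrow \mathrm{End}^\circ J(C)$; (2) run the eigenspace decomposition of $H^0(C,\Omega^1)$ under $\mu_n$ and compute the multiplicities via the Chevalley--Weil formula, checking that branching over $\le 3$ points yields multiplicity $\le 1$ for every nontrivial character; (3) pass to the isogeny decomposition $J(C)\sim\prod_d B_d$ indexed by the rational idempotents of $\QQ[\mu_n]$, discarding the part on which $\mu_n$ acts trivially; (4) count dimensions to get $[\,\QQ(\zeta_d)^{r_d}:\QQ\,]=2\dim B_d$, concluding CM type. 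The main obstacle is step~(2): getting the Chevalley--Weil multiplicity bookkeeping exactly right, including the edge cases where some of $a$, $b$, $a+b$ share factors with $n$ (these change which characters actually appear and whether the cover is connected), and correctly handling covers branched at fewer than three points or with non-reduced ramification data. Once the multiplicity-one statement is pinned down, the CM conclusion is a short linear-algebra argument of the same flavour as the proof of Theorem~\ref{cmsing}; alternatively one may simply cite \cite{gross} and \cite{koblitz}, where this computation is carried out, and use their result directly.
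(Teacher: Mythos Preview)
Your proposal is correct and matches one of the two arguments the paper gives in its Appendix: the paper likewise reduces to showing that the deck transformation acts on $H^0(C,\Omega^1)$ with each nontrivial character occurring with multiplicity at most one (this is the content of Lemma~\ref{eigenmultiplicity}, proved by an explicit adjoint computation rather than by citing Chevalley--Weil), and then concludes CM exactly as in the proof of Theorem~\ref{cmsing}. The paper also offers a shorter alternative you did not mention: it exhibits the map $(x,y)\mapsto(x^d,y^bx^a)$ realising the Belyi cover as a quotient of the Fermat curve $y^d=x^d-1$, and then simply quotes the known CM decomposition of the Fermat Jacobian from \cite{koblitz}; this bypasses the multiplicity computation entirely at the cost of importing a stronger external result.
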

\noindent Then theorem \ref{localalbanesemain}
follows from the theorem \ref{productofjacobians}. A proof of lemma \ref{belyijacobian} is given in the Appendix for reader's convenience. 
\begin{proof} (of theorem \ref{localalbanesemain})
\begin{lemma}\label{belyitype}
 Exceptional curves of a resolution of the singularity 
(\ref{cycliccoversing}) are Belyi cyclic covers.
\end{lemma}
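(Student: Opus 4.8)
The plan is to resolve the surface singularity $z^N = f(x,y)$ in two stages, tracking at each step how the exceptional curves map to $\PP^1$. First I would take the minimal embedded resolution $\sigma\colon \widetilde{S} \to \CC^2$ of the plane curve germ $f(x,y)=0$ (with the origin blown up); since $f$ is \emph{unibranched}, the dual graph of the total transform $\sigma^{-1}(f^{-1}(0))$ is a string (a linear chain), each exceptional component $C_i \cong \PP^1$ of $\sigma$ meeting at most two other components of the total transform and meeting the strict transform of $f^{-1}(0)$ at most once. The key combinatorial input is that along this chain the multiplicities $m_i$ of $f\circ\sigma$ along $C_i$ together with the multiplicity of the strict transform organize themselves so that the reduced total transform meets each $C_i$ in at most three points: in the interior of the chain exactly two (the two neighboring components), at the ends at most two, and the strict transform contributes at most one more — but an elementary count with the string structure shows the total is always $\le 3$.

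Second I would pull back the cyclic cover. Resolving $z^N = f(x,y)$ is done (Hirzebruch–Jung style) by taking the normalization $Y$ of $\widetilde{S} \times_{\CC^2} \{z^N = f\}$ and then resolving the remaining cyclic quotient (toric) singularities; the latter only contribute further $\PP^1$'s mapping finitely onto $\PP^1$'s, so they are again cyclic covers of $\PP^1$ branched at $\le 2$ points and cause no trouble. Over a fixed exceptional curve $C_i \cong \PP^1$ of $\sigma$, the normalization $Y \to \widetilde{S}$ restricts (away from the quotient singularities) to the cyclic cover of $C_i$ defined by extracting an $N$-th root of the section of $\O_{C_i}$ cut out by $f\circ\sigma$, which is supported precisely on the $\le 3$ points where $C_i$ meets the rest of the reduced total transform. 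Hence each component of $Y$ lying over $C_i$ is a connected component of a cyclic cover of $\PP^1 = C_i$ branched over $\le 3$ points — that is, a Belyi cyclic cover in the sense of the definition. The same is true a fortiori for the exceptional $\PP^1$'s introduced when resolving the Hirzebruch–Jung singularities (branched over $\le 2$ points).

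The main obstacle is the bookkeeping in the first stage: verifying carefully that unibranchedness forces the dual graph of the \emph{reduced} total transform to be a string in which every vertex has valence $\le 3$, and that the branch divisor on each $C_i$ of the $N$-th root construction is exactly the set of these intersection points with the correct multiplicities coprime-adjusted (after normalization, the branch points are those where the multiplicity of $f\circ\sigma$ is not divisible by $N$, which one must check does not exceed three in number). I would handle this by induction on the number of blow-ups in the minimal resolution, using the standard fact (e.g. from \cite{durfee75}) that for a unibranched germ one can choose the resolution so that the exceptional chain is linear, and then the valence bound is immediate from linearity plus the single point of attachment of the strict transform. Once the string structure and the branch-point count are in place, the identification of each exceptional curve of the resolution of (\ref{cycliccoversing}) with a Belyi cyclic cover is formal, and Lemma \ref{belyitype} — hence, via Lemma \ref{belyijacobian} and Theorem \ref{productofjacobians}, Theorem \ref{localalbanesemain} — follows.
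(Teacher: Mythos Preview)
Your overall two-stage strategy --- resolve the plane curve germ, then normalize the pullback of the cyclic cover and observe that each exceptional $\PP^1$ acquires a cyclic cover branched only at its intersections with the rest of the total transform --- is exactly the approach the paper takes. The identification of the Hirzebruch--Jung tails as covers branched at $\le 2$ points is also fine and matches the paper's remark that $\tilde\X$ already contains all positive-genus curves.

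There is, however, a genuine error in your combinatorial input. The dual graph of the reduced total transform of a unibranched plane curve germ is \emph{not} a string. Already for the cusp $x^2+y^3$ the minimal embedded resolution has three exceptional curves $E_1,E_2,E_3$ with $E_3$ meeting \emph{both} $E_1$ and $E_2$ \emph{and} the strict transform; the graph is a tripod, not a chain. For germs with several Puiseux pairs there is one such trivalent ``rupture'' vertex per pair, joined by chains, so the graph is a tree with branching. Your appeal to \cite{durfee75} does not help here: that paper gives ``no cycles'' (tree) from semi-simple monodromy, not ``no branching'' (string). Consequently the sentence ``each $C_i$ meeting at most two other components of the total transform'' is false as stated, and the argument as written collapses.

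What is true --- and what the paper actually proves --- is the weaker statement that every vertex of the resolution tree has valency at most $3$. The paper obtains this by a short induction on the sequence of blow-ups: at each step one blows up the unique intersection point of the current strict transform with the most recent exceptional curve (and, when tangency occurred, this point also lies on the previous exceptional curve), so the new exceptional curve meets at most the strict transform and two old exceptional curves, while no old curve gains a new neighbor beyond the one just created. Replace your string claim by this valency-$\le 3$ argument and the rest of your plan goes through unchanged.
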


A resolution of the singularity  (\ref{cycliccoversing}) 
can be obtained as follows. Let $\pi: \tilde \CC^2 \rightarrow \CC^2$ 
be a sequence of blow ups of $\CC^2$ containing the germ $f(x,y)=0$
and yielding a resolution of the latter. Denote by $\tilde f: \tilde \CC^2 
\rightarrow \CC$ the composition of $\pi$ and $f: \CC^2 \rightarrow \CC$ 
and let $\lambda_N: \tilde \CC 
\rightarrow \CC$ be the $N$-fold cover of $\CC$ branched at the origin. 
Then one has the map of the normalization $\tilde \X$ of the fiber product 
of the maps $\tilde f$ and $\lambda_N$: 
\begin{equation}\label{fiberproduct}
\tilde \X \buildrel \N \over \rightarrow \tilde \CC^2 \times_{\CC} \tilde \CC \rightarrow X 
\end{equation}
Here $\tilde \X$ has at most simple surface singularities and 
their standard resolution, composed with the maps in 
(\ref{fiberproduct}), provides a resolution of $X$. Moreover, already 
$\tilde \X$ contains all the curves of a positive genus appearing in 
a resolution of $X$. 

Note that $\N$ replaces each exceptional curve $D$ of resolution $\tilde \CC^2 
\rightarrow \CC^2$ by its cyclic branched cover of degree $gcd(N,m)$ where 
$m$ is the multiplicity of $\pi^*(f)$ along $D$. Moreover, the ramification 
occurs at the intersection points of $D$ with the remaining exceptional curves.
To finish the proof of Lemma \ref{belyitype}, it is enough to show that 
each exceptional curve of $\pi$ has at most three 
intersections with remaining exceptional curves. This is the case as one 
can see, for example, from an inductive argument observing that the 
collection of exceptional curves on say $(k+1)$-th step 
in a resolution of $f=0$, is obtained from the 
collection of exceptional curves on step $k$ by blowing up 
up the intersection point of a proper preimage  
of $f$ appearing on the $k$-th step and the intersection 
point of exceptional curves of the $k$-th and $(k-1)$ steps.
Such triple intersection occurs iff the  exceptional curve
on $k-1$ step was tangent to the proper preimage of $f$ on that step. 
This yields the above claim on the number of 
intersections each $E$ can have with the
 remaining exceptional curves.

\end{proof}

We shall conclude this section indicating how one can  
obtain the identification of the CM type of isogeny components
of the local Albanese. The argument above implies that 
the components of the resolution of the surface 
singularity (\ref{cycliccoversing})
having non-trivial Jacobians 
(i.e. the components with a positive genus) correspond to the rapture 
points of the resolution tree of $f(x,y)=0$ (cf. \cite{wall})
\footnote{i.e. the point of the dual graph of resolution where 
with valency greater than 2.}. 
As follows from the discussion above, the valency of each rapture point
is equal to $3$.
The degree $d$ of the corresponding Belyi cover of a component $D$
of the exceptional set corresponding to such rapture point 
is equal to $gcd(N,m(D))$,
where $m(D)$ is the multiplicity of the pull back of the germ 
$f$ on the resolution on $D$. The ramification points of the Belyi cover 
correspond to the intersections with other exceptional curves in the 
resolution. The ramification index at the intersection of $D$ 
with another exceptional curve $D'$ is equal 
to ${m(D) \over {gcd(m(D),m(D')}}$.
This data consisiting of the degree and the ramification indices 
identifies the isomorphism type of the 
cyclic Belyi cover completely. Using 
the formulas in Lemma \ref{eigenmultiplicity} one can derive 
the CM type of corresponding Jacobian and hence the isogeny components
of local Albanese variety.

\begin{example} Consider the singularity $x^2+y^5$. The dual graph of its 
resolution has one rapture point. The multiplicity of the corresponding 
component is equal to $10$ with multiplicities of other three 
intersecting curves equal to $5,4,1$ respectively. The corresponding 
Belyi cover is 
\begin{equation}\label{25}
y^{10}=x^4(x-z)z^5
\end{equation}
(\ref{belyimultiplicity}) yields that the non-zero eigenvalues of the covering transformation
are $e^{{2 \pi \sqrt{-1}}\over {10}}$ and $e^{-{2 \pi \sqrt{-1}}\over {10}}$.
This determines the CM type of the Jacobian of the genus two curve (\ref{25})
corresponding to $\QQ(\zeta_{10})$.
\end{example}

\begin{example} For $y=x^{3 \over 2}+x^{7 \over 4}$
the characteristic polynomial of the monodromy is 
$$\Phi_{26}(t)\Phi_6(t^2)=\Phi_{26}(t)\Phi_{12}(t)$$
where $\Phi_n(t)$ denotes the cyclotomic polynomial of degree $n$.
The corresponding local Albanese variety is the 
product of simple CM-abelian varieties corresponding
to $\QQ(\zeta_{26})$ and $\QQ(\zeta_{12})$. The CM type of each field
is given by (\ref{cmtype}).  
\end{example}

\section{Splitting of Albanese varieties}\label{splittingsection}

In this section we show  
that the Albanese variety of certain cyclic branched 
covers of $\PP^2$ is isogenous to a product of abelian 
varieties of CM type. A similar result on the existence 
of an isogeny between the Albanese variety of a cyclic 
cover and a product of elliptic curves was obtained 
in \cite{jose}, but with much stronger restrictions on the singularities 
of the discriminant.

Recall that a construction of a model of 
cyclic branched cover with a given ramification curve can be given as
follows (cf. \cite{duke}). 
Let $D$ be a reduced irreducible curve in $\PP^2$ and let 
$\pi_1(\PP^2-D) \rightarrow \ZZ_N$ be a surjection onto a cyclic group.
The corresponding unramified cyclic covering of $\PP^2-D$ of degree $N$ is 
uniquely defined just by $D$, 
since the surjection $\pi_1(\PP^2-D) \rightarrow \ZZ_N$ coincides
(up to an automorphism of $\ZZ_N$) with the surjection 
given by the linking number with $D$. The  
 affine portion of the $N$ fold cyclic cover is given by 
\begin{equation}\label{equationNcover}
    z^N=F(x,y)
\end{equation}
where $F=0$ is an equation of $D$.
A compactification of the surface (\ref{equationNcover}),
combined with a resolution of singularities, yields a smooth 
model $X_N$ of covering space of $\PP^2$ branched 
over $D$. If $N=deg D$, then the projective closure of (\ref{equationNcover})
yields a model with isolated singularities in $\PP^3$. In 
the cases when $deg D>N$, a model with isolated singularities can be obtained
by the normalization of the projective closure.

\begin{theorem}\label{splittingresult}
  Let $D$ be a curve in $X=\PP^2$ with singularities of CM type only.
Then for a smooth projective 
model $X_N$ of  
$N$-fold cyclic cover of $\PP^2$ branched over $D$ 
(or equivalently the surface (\ref{equationNcover})), 
the Albanese variety 
$Alb(X_N)$ is isogeneous of a product of 
 abelian varieties of CM type.
\end{theorem}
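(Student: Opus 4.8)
The plan is to show that $H^1(X_N,\QQ)$ is a Hodge structure of CM type; since $X_N$ is smooth projective this Hodge structure is polarizable of weight one, so the attached abelian variety $Alb(X_N)$ is then isogenous to a product of simple abelian varieties each of CM type (a sub-Hodge structure of a polarizable CM Hodge structure is again of CM type, its Mumford--Tate group being a connected subgroup of a torus, hence a torus). The Galois group $\ZZ_N$ of the cover acts on $H^1(X_N,\QQ)$, and the fixed subspace is $H^1(X_N/\ZZ_N,\QQ)$, which vanishes because $X_N/\ZZ_N$ is a normal projective surface birational to $\PP^2$, hence rational; so $H^1(X_N,\QQ)=H^1(X_N,\QQ)_{\ne 1}$ and it suffices to treat the non-invariant part.

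First I would pass to a convenient model: let $\bar X_N$ be the normalization of the projective closure of $z^N=F(x,y)$, where $F=0$ is an equation of $D$ and the line at infinity is transverse to $D$, and let $\rho\colon X_N\to\bar X_N$ be a $\ZZ_N$-equivariant resolution, with $\pi\colon\bar X_N\to\PP^2$ the covering map. Then $\bar X_N$ has only isolated singularities: those over $P\in Sing(D)$, where the germ of $\bar X_N$ is $z^N=f_P$, and (none if $N$ divides $\deg D$, otherwise) a point at infinity which is a cyclic quotient singularity. The crucial step is the vanishing $H^1(\bar X_N,\QQ)=0$. From the Leray sequence of $\rho$ one has the exact sequence of mixed Hodge structures $0\to H^1(\bar X_N,\QQ)\to H^1(X_N,\QQ)\to\bigoplus_P H^1(E_P,\QQ)\to H^2(\bar X_N,\QQ)$, with $E_P=\rho^{-1}(P)$; in particular $H^1(\bar X_N,\QQ)$ is a sub-Hodge structure of the pure weight one structure $H^1(X_N,\QQ)$, hence pure of weight one, so it is enough to check that its $(1,0)$-part, read off from $H^1(\bar X_N,\mathcal O_{\bar X_N})$, vanishes. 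The latter is zero because $\pi_*\mathcal O_{\bar X_N}$ is a direct sum of line bundles on $\PP^2$, each with vanishing $H^1$ (as $H^1(\PP^2,\mathcal O(m))=0$ for every $m$) --- the superabundance responsible for a possibly nonzero irregularity of $X_N$ is concentrated in $R^1\rho_*\mathcal O_{X_N}$, i.e. is purely local. Granting this, the displayed sequence gives an injection $H^1(X_N,\QQ)\hookrightarrow\bigoplus_P H^1(E_P,\QQ)$ of mixed Hodge structures; as the source is pure of weight one and the map is strictly compatible with weights, it factors through an injection $H^1(X_N,\QQ)\hookrightarrow\bigoplus_P Gr^W_1 H^1(E_P,\QQ)=\bigoplus_{P,i}H^1(\widetilde E_{P,i},\QQ)$, the sum running over the normalizations of the irreducible exceptional curves; rational components contribute nothing.

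It then remains to see that the target is a CM Hodge structure. The exceptional divisor over a point at infinity is a chain of rational curves, contributing nothing. For $P\in Sing(D)$, the argument of Lemma \ref{belyitype} applies to the surface $z^N=f_P$ --- it uses only the embedded resolution combinatorics of the plane germ $f_P$ --- and shows that each positive genus exceptional curve $\widetilde E_{P,i}$ of a resolution of $z^N=f_P$ is a Belyi cyclic cover, so its Jacobian is of CM type by Lemma \ref{belyijacobian}; equivalently, by Theorem \ref{productofjacobians} the product $\prod_i\mathrm{Jac}(\widetilde E_{P,i})$ over such $i$ is isogenous to the local Albanese variety $Alb_{f_P}$, which is of CM type by the hypothesis that $P$ is a singularity of CM type. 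Hence $\bigoplus_{P,i}H^1(\widetilde E_{P,i},\QQ)$ is a CM Hodge structure, $H^1(X_N,\QQ)$ is a sub-Hodge structure of it, and therefore $Alb(X_N)$ is isogenous to a product of abelian varieties of CM type, with endomorphism algebras products of cyclotomic fields by the explicit description in Lemma \ref{belyijacobian} (resp. Theorem \ref{localalbanesemain}).

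The step I expect to be the main obstacle is the global vanishing $H^1(\bar X_N,\QQ)=0$ --- the assertion that all of the first cohomology of a cyclic cover of $\PP^2$ is localized at the singularities of the branch curve, with nothing coming from the ``ambient'' geometry. Making this rigorous requires setting up the normal model and its behaviour at infinity carefully enough to compute $\pi_*\mathcal O_{\bar X_N}$ and to identify $Gr^0_F H^1(\bar X_N,\CC)$ with $H^1(\bar X_N,\mathcal O_{\bar X_N})$ (for which one invokes that the relevant surface singularities are du Bois, or else argues the localization directly at the level of the eigensheaf decomposition of $\pi_*\mathcal O$, this being the Hodge-theoretic refinement of the proof of Theorem \ref{summary}(1)). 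A secondary point requiring attention is that Lemma \ref{belyitype} and Theorem \ref{productofjacobians} are applied with the global $N$ rather than the order of the local monodromy of $f_P$; one checks this does not affect their proofs, or else reduces to it by passing through the tower of intermediate cyclic covers.
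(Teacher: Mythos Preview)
Your strategy is sound and lands on the same endpoint as the paper --- a morphism between $H^1(X_N)$ and the local contributions at the singular points of $D$, followed by Poincar\'e complete reducibility --- but the paper reaches it by a different, purely topological route that sidesteps the du Bois issue you correctly flag as the main obstacle. Rather than proving $H^1(\bar X_N,\QQ)=0$ for a normal model, the paper works with the open surface $X_N\setminus(\bar R\cup\E)$, whose Albanese agrees with $Alb(X_N)$ by the exact sequence of the pair. A Lefschetz argument then gives a surjection $H_1((U-R)_N)\twoheadrightarrow H_1(X_N\setminus(\bar R\cup\E))$ from the $N$-fold cover of a punctured tubular neighbourhood $U$ of the branch locus; a Mayer--Vietoris decomposition of $(U-R)_N$ into the links $L_{N,P}$ of the surface singularities $z^N=f_P$ and a piece over the smooth part of $R$ (on which the deck group acts trivially, so its image in $H_1(X_N)$ vanishes) finally yields a surjection $\bigoplus_P H_1(L_{N,P})\twoheadrightarrow H_1(X_N)$ of mixed Hodge structures. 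No coherent cohomology of singular surfaces enters. Your approach is arguably more conceptual once the vanishing is in hand, but justifying $Gr^0_FH^1(\bar X_N,\CC)=H^1(\bar X_N,\O_{\bar X_N})$ for the specific singularities $z^N=f_P$ (which need not be rational, and are not obviously log canonical for all $N$) is precisely the work the paper's argument trades for the hands-on topology.

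One genuine slip: Lemma~\ref{belyitype} does \emph{not} apply to an arbitrary plane germ $f_P$. Its proof uses that in the embedded resolution of a \emph{unibranched} singularity each exceptional curve meets at most three other components of the total transform; this fails already for an ordinary point of multiplicity four (the paper's example $\prod_{i=1}^4(x-\alpha_i y)$), whose local Albanese is generically not of CM type. So your two ``equivalent'' routes are not equivalent: only the second --- Theorem~\ref{productofjacobians} together with the \emph{hypothesis} that each $P$ has CM type --- is valid in the generality of the theorem. Drop the appeal to Lemma~\ref{belyitype} entirely. Your secondary concern about the global $N$ versus the order of the local monodromy is shared with the paper's proof; in either approach one observes that the weight-one piece attached to $L_{N,P}$ is the sub-Hodge structure of $Gr^W_1H^1(M_{f_P})$ on which the monodromy acts through $N$-th roots of unity, hence a sub-Hodge structure of a CM Hodge structure, hence itself CM.
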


\begin{proof} Let $\psi: X_N \rightarrow X=\PP^2$ be the 
projection of a smooth model of the $N$-fold cyclic 
cover (\ref{equationNcover}).
Let $\E=\cup E_i$ be the exceptional set. We shall denote by $\bar R$ 
the proper preimage of 
the branching locus of $\psi$ in $X_N$. This branching locus $R$
contains $D$ and possibly the line at infinity in $(x,y)$-plane 
of the cover (\ref{equationNcover}) 
(depending on the $gcd(deg D,N)$). 
The cohomology $H^1(X_N-\bar R)$ supports a mixed Hodge structure 
of type $(1,0),(0,1),(1,1)$ and hence one can consider the Albanese variety 
corresponding to the weight one part (cf. \cite{iitaka}, \cite{kike}).

{\it Step 1. Albanese of branched and unbranched covers.}
We claim that one has the identification:
\begin{equation}\label{identificationalbanese}
   Alb(X_N-\bar R \cup E_i)=Alb(X_N)
\end{equation}

We have the following exact sequence of the pair:
\begin{equation}\label{sequenceofpair}
\rightarrow 0=H^1(X_N,X_N-\bar R \cup E_i) \rightarrow  H^1(X_N) 
\rightarrow H^1(X_N-\bar R) \rightarrow 
\end{equation}
$$H^2(X_N,X_N-\bar R \cup E_i) 
\rightarrow H^2(X_N)
$$
The identification $H^i(X_N,X_N-\bar R \cup E_i)=
H_{4-i}(\bar R \cup E_i)$ shows that the left term is zero and 
that the right map is injective since the intersection form 
on $H^2(X_N)$ restricted on the subgroup generated by fundamental cycles 
of $\bar R,E_i$, is non-degenerate.

The sequence (\ref{sequenceofpair}) is a sequence of mixed Hodge structures
with the Hodge structure on $H^1(X_N-\bar R \cup E_i)$ having weights 1 and 2.
Hence (\ref{sequenceofpair}) induces the isomorphism
(\ref{identificationalbanese}).

{\it Step 2 Homology of unbranched cover and homology of cover 
of punctured regular 
neighborhood of branching locus} 

Let $U$ be a small regular neighborhood of $R$ in $\PP^2$. 
Since $R$ is ample, there exists a divisor $R' \subset U$ such that 
$\pi_1(R'-R \cap R') \rightarrow \pi_1(X-R)$ is a surjection
(by the Lefschetz hyperplane section theorem applied to quasi-projective 
manifold $X-R$). The latter surjection can be factored as
\begin{equation} 
 \pi_1(R'-R \cap R') \rightarrow \pi_1(U-R) 
\buildrel i_{U-R} \over \rightarrow \pi_1(X-R)
\end{equation}
and hence the right map is surjective.
If $K_{X-R} \subset \pi_1(X-R)$ (resp. $K_{U-R} \subset \pi_1(U-R)$)
is the kernel of surjection $lk_N: \pi_1(X-R) \rightarrow \ZZ_N$ 
(resp. the kernel of composition $lk_N \circ i_{U-R}$), then 
$i_{U-R}\vert_{K_{U-R}}: K_{U-R} \rightarrow K_{X-R}$ is surjective as well.
Hence denoting by $(U-R)_N$ the $N$-fold cover of $U-R$ corresponding 
to index $N$ subgroup $K_{U-R}$ on $\pi_1(U-D)$, we obtain the surjection:
\begin{equation}\label{mapuandxn}
        H_1((U-R)_N,\ZZ) \rightarrow H_1(X_N-\bar R\cup E_i) 
\end{equation}
(one verifies that the points at infinity do not provide
contributions since $D$ is always assumed to be transversal to 
the line at infinity cf. \cite{duke}).
Moreover, both groups support a mixed Hodge structure and 
hence the map (\ref{mapuandxn}) induced by embedding  induces 
a surjection of mixed Hodge structures.

{\it Step 3. Homology of punctured cover of 
regular neighborhood of branching locus and homology of 
links of singularities of cyclic cover.}

The covering space $(U-R)_N$ can be viewed as a regular 
neighborhood of the union of exceptional set $\E$ of $X_N$ 
for the map of $X_N$ onto the surface (\ref{equationNcover}) 
and the proper preimage of $R-Sing R$ in $X_N$
(where $Sing R$ is set of singular points of $R$). 
As such, it is a union 
of the $N$-fold cyclic covering $(U_{R-Sing R}^*)_N \subset X_N$ 
of a regular neighborhood $U_{R-Sing R}-(R-Sing R)$ 
of $R-Sing R \subset \PP^2$ with deleted $R-Sing R$ 
and punctured regular neighborhoods 
of the susbets $\E_P \subset \E$ of exceptional subset in
resolution $X_N$ of (\ref{equationNcover}) each being the 
preimage of the corresponding point $P \in Sing R$.

Let is consider the following part of the 
Mayer-Vietoris sequence, corresponding to just mentioned 
decomposition:
$$
(U-R)_N=\bigcup_P (U(\E_P)-\E_P) \cup (U_{R-Sing R}^*)_N
$$ 
(recall that $U(\E_P)-\E_P$ 
is a retract of the link $L_{N,P}$ of singularity of (\ref{equationNcover})
above $P$):
\begin{equation}\label{surjectionhomology}
  \oplus_{P \in Sing R} H_1(L_{N,P}) \oplus H_1((U_{R-Sing R}^*)_N)  
\rightarrow H_1((U-R)_N) \rightarrow 
\end{equation}
$$\rightarrow H_0((\bigcup_P L_{N,P})
\cap (U_{R-Sing R}^*)_N)
$$
Observe that the first homomorphism in (\ref{surjectionhomology}) is 
surjective.
Indeed the homomorphism following in the Mayer-Vietoris sequence  
the right map in (\ref{surjectionhomology}) is the map 
of a sum of zero dimensional
homology groups equivalent to injective map $\CC^{{\rm Card} Sing D}
\rightarrow \CC^{{\rm Card} Sing D+1}$ and hence is injective.

Finally note that the image of the map 
$H_1((U_{R-Sing R}^*)_N) \rightarrow H_1(X_N)$ 
is trivial. Indeed, 
the action of the covering group on $H_1((U_{R-Sing R}^*)_N)$ is trivial 
and hence the image of this group in $H_1(X_N)$ is trivial since the 
eigenspace on $H_1(X_N)$ corresponding to eigenvalue 1 has the same
rank as $H_1(X,\ZZ)$ and therefore is zero.

{\it Step 4. End of the proof}

Step 3 implies that composition map
$H_1(L_{N,P}) \rightarrow H_1((U-R)_N) \rightarrow H_1(X_N)$ is surjective (cf. \cite{duke}).
This yields the surjection of direct sum of the Albanese varieties 
corresponding to the remaining summands in the left term of 
(\ref{surjectionhomology}) (i.e. the local Albanese varieties
of all singular points of $D$) onto $Alb(X_N)$ and the claim of the theorem 
follows from Poincare complete reducibility theorem (cf. \cite{langeabvar}).

\end{proof}

\section{Proof of the main theorem and Examples}\label{proofmain}

In this section we shall finish the proofs of Theorem \ref{main}, 
the Corollary \ref{coromain},
and will discuss several examples.

\begin{proof} (of theorem \ref{main}).  Let $\pi_1(\PP^2-\Delta) \rightarrow 
\ZZ_d$ be the holonomy representation 
of the isotrivial family (\ref{morphismoffamily}).
Let $X_d$ denote (a smooth model of) the $d$-fold 
cyclic cover of $D$ branched over $\Delta$ and $\Delta' \subset X_d$
be such that $X_d-\Delta' \rightarrow \PP^2-\Delta$ is an unramified
cyclic cover. The holonomy group $\ZZ_d$ acts on $\A \times_{(\PP^2-\Delta)} X_d$
containing the $\ZZ_d$-invariant subset $(X_d-\Delta') \times A$ 
(with the diagonal action (cf. \ref{holonomysubsection})). 
Since by assumption the Chow trace of $\A$ is trivial, $MW(\A)$ 
is the group of section of morphism $\pi$ (cf. Theorem \ref{langneronref}).
We have the following:
\begin{prop} One has the canonical identification
\begin{equation} 
     MW(\A)=MW(X_d \times_{\PP^2} \A)^{\ZZ_d}=
Hom(Alb(X_d),A)^{\ZZ_d}
\end{equation}
\end{prop}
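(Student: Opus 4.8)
The plan is to establish the two asserted identifications in turn, working with a fixed smooth model throughout and being careful about the distinction between rational and regular sections.

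\medskip

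\noindent\textbf{Step 1: $MW(\A) = MW(X_d\times_{\PP^2}\A)^{\ZZ_d}$.} First I would recall that because the Chow trace of $\A$ is trivial (this is our hypothesis a), via the Proposition identifying the trace with $A^G$ and the assumption that $G$ acts without positive-dimensional fixed subvarieties), Theorem \ref{langneronref} gives that $MW(\A)$ is literally the finitely generated group of rational sections $s\colon \PP^2\dashrightarrow \A$ of $\pi$, equivalently the $\CC(x,y)$-points of the generic fiber $A$. Now pullback along $X_d\to \PP^2$ sends a rational section of $\A$ to a $\ZZ_d$-invariant rational section of $X_d\times_{\PP^2}\A\to X_d$, giving a map $MW(\A)\to MW(X_d\times_{\PP^2}\A)^{\ZZ_d}$. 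Conversely, a $\ZZ_d$-invariant rational section of the pullback descends: over the open locus $X_d-\Delta'\to \PP^2-\Delta$ where the cover is an unramified $\ZZ_d$-Galois covering, a $\ZZ_d$-equivariant map to $A$ factors through the quotient, and since we only need a \emph{rational} section this descent over a dense open set suffices. The two constructions are mutually inverse, so the first equality holds. I would note that the finite generation of both sides (again Theorem \ref{langneronref}, applied over $\CC(X_d)$, using that the Chow trace can only shrink under this cyclic base change) makes the bookkeeping harmless.

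\medskip

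\noindent\textbf{Step 2: $MW(X_d\times_{\PP^2}\A)^{\ZZ_d} = Hom(Alb(X_d),A)^{\ZZ_d}$.} By construction \eqref{levin}, over $X_d-\Delta'$ the family $X_d\times_{\PP^2}\A$ is \emph{trivialized}: $(X_d\times_{\PP^2}\A)|_{X_d-\Delta'} \cong (X_d-\Delta')\times A$, with $\ZZ_d$ acting diagonally. Hence a rational section of $X_d\times_{\PP^2}\A$ over $X_d$ is the same as a rational map $X_d\dashrightarrow A$, i.e. (since $A$ is an abelian variety and $X_d$ is smooth projective) a morphism $X_d\to A$ by the extension property of maps from smooth varieties to abelian varieties. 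Modulo translation, such a morphism factors uniquely through the Albanese: $X_d\to Alb(X_d)\to A$, giving $Hom(Alb(X_d),A)$ once one pins down base points. Tracking the $\ZZ_d$-action: the group acts on $X_d$, hence on $Alb(X_d)$ by functoriality, and on $A$ through the holonomy $\ZZ_d\to \mathrm{Aut}\,A$; a section is $\ZZ_d$-invariant precisely when the corresponding homomorphism $Alb(X_d)\to A$ intertwines these two actions, i.e. lies in $Hom(Alb(X_d),A)^{\ZZ_d}$. Combining the two steps gives the displayed identification.

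\medskip

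\noindent\textbf{Main obstacle.} The delicate point is Step 2, matching rational sections of the pulled-back family over all of $X_d$ (not just over $X_d-\Delta'$) with genuine morphisms $X_d\to A$, and verifying that the $\ZZ_d$-equivariance on the section side corresponds exactly to equivariance on the $\mathrm{Hom}$ side after the Albanese factorization — in particular that no spurious torsion or translation ambiguity is introduced when passing $A$-torsor structure to $A$ and then to $Alb(X_d)$. I expect this to be handled by choosing a $\ZZ_d$-fixed (or at least compatible) base point, using that $Hom(Alb(X_d),A)$ is torsion-free so that the identifications are clean up to isogeny, which is all that is needed for the rank computation in Theorem \ref{main}.
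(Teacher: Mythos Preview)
Your proof is correct and follows essentially the same route as the paper: pullback/descent for the first equality, and trivialization over $X_d-\Delta'$ together with the universal property of the Albanese for the second. In fact you are more careful than the paper, which simply writes $MW(\A_d)=Mor(X_d,A)=Hom(Alb(X_d),A)$ without commenting on the translation ambiguity; your observation that the discrepancy lies in $A(\CC)^{\ZZ_d}$, which is finite by hypothesis a), is exactly what is needed to make the identification precise at the level of ranks.
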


Indeed, assigning to $s: \PP^2-\Delta \rightarrow \A$ the 
regular section $(X_d-\Delta') \times_{(\PP^2-\Delta)} s(\PP^2-\Delta)$ 
of $(X_d-\Delta') \times_{\PP^2-\Delta} \A$ (which is invariant under the 
action of $\ZZ_d$) 
 provides the first isomorphism. 
The second follows from the identification:
\begin{equation}
MW(\A_d)=Mor(X_d,\A)=Hom(Alb(X_d),A)
\end{equation}

Since $Alb(X_d)$ is abelian variety of CM type 
it follows that the group 
$Hom(Alb(X_d),A)$
is trivial 
unless  the abelian variety 
$A$ is of CM type as well. Moreover, if $A$ is simple and corresponds 
to a cyclotomic
 field of degree $d$ then $rkMW$ is positive only if the decomposition 
of $Alb(X_d)$ into simple components contains $A$.  
This yields the inequality (\ref{inequalitymain}).
If $A$ is a component of $Alb(X_d)$ with multiplicity $s$ then 
$MW(A_d)=Hom(A^s,A)$ which has rank $s \cdot dim End^{\circ}(A)=s\phi(d)$.
\end{proof}

\begin{proof} (Of corollary \ref{coromain})
If none of characteristic polynomials of local monodromy of singularities
of $\Delta$  has a primitive $d$-th root of unity as a zero
then the global 
Alexander polynomial does not contain the factor 
$\Phi_d$ and hence the Albanese of $X_d$ cannot have as a factor 
a variety of CM type corresponding to the field $\QQ(\zeta_d)$. 
The example in the corollary (\ref{coromain}) discussed below.
\end{proof}

Part 3 of the Theorem \ref{main} provides an effective 
way to calculate the Mordell-Weil ranks of abelian 
varieties in the class described in its statement.
Below are several examples illustrating this procedure.

\begin{example}\label{finalexample} Consider the curve $C_{p,2}$ in $(u,v)$-plane over $\CC(x,y)$
given by
\begin{equation}\label{equationexample} 
u^p=v^2+(x^p+y^p)^2+(y^2+1)^p
\end{equation}
This curve over $\CC(x,y)$ is isotrivial since all curves 
\begin{equation}\label{examplecurve}
u^p=v^q+c, \ \ \ c \in \CC, \ \ \ c \ne 0
\end{equation}
are biholomorphic. Moreover (\ref{equationexample}) 
has as its discriminant the curve 
\begin{equation}\label{equationcurve}
  C_{p,2}: \ \ (x^p+y^p)^2+(y^2+1)^p
\end{equation}
The Alexander polynomial of the complement is the cyclotomic
polynomial of degree $2p$ (\cite{duke}): 
\begin{equation}
  \Phi_{2p}={{(t^{2p}-1)(t-1)}\over {(t^2-1)(t^p-1)}}
\end{equation}
The curve (\ref{examplecurve}) is the Belyi cyclic cover and
its Jacobian was described earlier as $A(\QQ(\zeta_{2p}),\Phi)$
with the CM type $\Phi$ as in example \ref{pqsing}. 
Moreover, the Albanese variety 
of the covering of degree $2p$ of $\PP^2$ ramified along 
$C_{p,2}$ is isomorphic to $A(\QQ(\zeta_{2p}),\Phi)$ as well.
Since $End^0(A(\QQ(\zeta_{2p}))=\QQ(\zeta_{2p})$ the claim follows.
Note that it follows that the above Jacobian is simple  
as a consequence of the discussion of example \ref{pqsing} 
since the CM type is primitive (cf. Example  \ref{primitivity}
or \cite{shimura}, section 8.4, 
p.64).
\end{example}

\begin{example} The Jacobian of the curve (\ref{examplebelyi})
considered in section
\ref{lefexample} has 
a Mordell-Weil rank equal to zero unless the Alexander polynomial 
of the curve $\Phi(x,y)$ has a root in $\QQ(\zeta_p)$. 
If $\Phi(x,y)$ is the equation 
(\ref{equationcurve}) then the curve (\ref{examplebelyi}) is birational 
over $\CC(x,y)$ to the curve (\ref{equationexample}) and hence
the Mordell-Weil rank for the Jacobian of (\ref{examplebelyi}) for 
such $\Phi(x,y)$ 
is $p-1$.
\end{example}

\begin{remark}
The Jacobian of the curve in example 
\ref{finalexample} is a simple isotrivial abelian variety over 
$\CC(x,y)$ such that rank of its Mordell-Weil group is equal to $p-1$.
In  particular the rank of abelian varieties over $\CC(x,y)$ can 
be arbitrarily large. 

In \cite{jose} it was shown that finding a bound 
on the rank of Mordell-Weil group for isotrivial elliptic curves
over $\CC(x,y)$ with discriminant having only nodes and cusps as its 
singularities is equivalent to finding a bound \footnote{either constant 
or depending on the degree of the discriminant}
 on the multiplicity 
of the factor $t^2-t+1$ in the Alexander polynomial of the discriminant
(in \cite{jose} more general cases including ADE singularities
were also considered).
For curves with nodes and cusps the largest known at the moment multiplicity 
is $4$ (cf. \cite{jose}).
Similarly, for abelian varieties $\A$ with generic fiber being 
a simple abelian 
variety of CM type corresponding to $\QQ(\zeta_d)$ and a CM type as 
in example \ref{pqsing} 
the rank of $MW(\A)$ is related to the multiplicity of the factor 
$\phi_d(t)$ (the cyclotomic polynomial of degree $d$) in the Alexander
polynomial of the discriminant. 
Note that there are very 
few known examples of plane curves with non-trivial Alexander polynomials 
and singularities beyond those of ADE type (cf. \cite{jose}, \cite{lectures}).  In particular, the largest multiplicity of $\phi_{pq}$ for $p,q>3$ 
is achieved for curves studied in \cite{josehirano}. They correspond
to threefolds given in the example below:
\end{remark}

\begin{example} 
Let 
\begin{equation}\label{hirano}
u^2=v^{2k+1}+(x^{2(2k+1)}+y^{2(2k+1)}+1-2x^{2k+1}+2(xy)^{2k+1}+y^{2k+1})
\end{equation} 
be the curve over $\CC(x,y)$. The discriminant is given by the 
second summand in the right hand side of (\ref{hirano}).
This is the curve studied in \cite{josehirano} where it was shown that 
the Alexander polynomial is 
\begin{equation} 
({{t^{2k+1}+1} \over {t+1}})^3
\end{equation}   
Generic fiber of fibration (\ref{hirano}) is 
hyperelliptic curve. For $2k+1=p$ its Jacobian 
is a simple abelian variety of a CM type and the  
rank of Mordell-Weil of the corresponding to (\ref{hirano}) 
family of Jacobians 
is $3(p-1)$.
\end{example}

\section{Appendix: Jacobians of Belyi covers}

In this appendix we shall prove the lemma \ref{belyijacobian} 
i.e. that the Jacobians of Belyi cyclic 
covers are abelian varieties of the CM type. 
Though the lemma \ref{belyijacobian} is apparently not new 
(cf. \cite{gross},\cite{koblitz})
the proof given below for convenience contains explicit 
formulas for the eigenvalues of the automorphisms of Belyi covers 
acting on the space of holomorphic 1-forms.

 \begin{proof}(of lemma \ref{belyijacobian}) We claim that a generator of the 
group of deck transformations of a cyclic 
Belyi cover acting on  $H_1$ does not have multiple eigenvalues.
Once this is established, an argument as in the proof of the 
theorem \ref{cmsing}, 
shows that for the Jacobian of such cover one has 
$2dim J=dim End^{\circ}(J)$ i.e. $J$ has a CM type.

Let $C \rightarrow \PP^1$ be a Belyi cyclic cover and 
let $d$ be its degree, i.e. the group of roots of unity of 
degree $\mu_d$ acts on $C$ with three points having non-trivial 
stabilizers. Let $a,b,c$ be the indices these stabilizers in $\mu_d$.
As a model of such Belyi cover (suitable for our calculations 
below, cf. proof of Lemma \ref{eigenmultiplicity}), 
one can choose the normalization of 
plane curve:
\begin{equation}\label{belyieq}
y^d=x^a(x-z)^bz^c , \ \ \ a+b+c=d.
\end{equation}  
The action of $\mu_d$ is given by 
$T: (x,y,z) \rightarrow (x,e^{2 \pi i \over d}y,z)$. 
Let $T_*$ be the induced map on $H_1(X,\CC)$. 
Now, the proper preimage for the map 
\begin{equation}
   (x,y) \rightarrow (x^d,y^bx^a)
\end{equation}
of the affine model of (\ref{belyieq}), i.e.
\begin{equation}
 y^d=x^a(x-1)^b
\end{equation}
yields a curve which has a component the Fermat curve 
\begin{equation}
 y^d=x^d-1.
\end{equation}
Since the Jacobian of Fermat curve is a product 
of abelian varieties of CM type (cf. \cite{koblitz})
this implies that the same is the case for 
cyclic Belyi covers.

\end{proof}

The following allows one effectively to calculate the CM type of 
local Albanese varieties in many cases. 
These formulas extend the special case presented in \cite{weil1}.
We have the following:
\begin{lemma}\label{eigenmultiplicity}
1. The multiplicity of the 
eigenvalue $\omega_d^j=e^{2 \pi \sqrt{-1} j\over d}$
 of $T_*$ acting on the space of holomorphic 1-forms of 
 Belyi cyclic cover as above is equal to:
\begin{equation}\label{belyimultiplicity}
-([-{{aj}\over d}]+[-{{bj}\over d}]+[{{(a+b)j} \over d}]+1)
\end{equation}
where $[ \cdot ]$ denotes the integer part.
In particular this multiplicity is equal either to zero or one.

2.Let $gcd(a,b,c,d)=1$ (i.e. the Belyi cover is irreducible). 
 Then the characteristic 
polynomial of the deck transformation acting on $H_1$ is 
given by 
\begin{equation} 
    \Delta(t)={{(t^{d}-1)(t-1)^2} \over{{(t^{gcd(a,d)}-1)}
{(t^{gcd(b,d)}-1)}{(t^{gcd(c,d)}-1)}}}
\end{equation}
\end{lemma}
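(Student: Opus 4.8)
The plan is to treat both parts by explicitly computing the action of $T_*$ on the relevant cohomology via the standard ``eigenbundle'' decomposition of a cyclic cover. First I would work on the normalization $C$ of the curve $y^d = x^a(x-z)^b z^c$ with $a+b+c=d$, restricting to the affine chart $z=1$, where $C$ is the normalization of $y^d = x^a(x-1)^b$. The group $\mu_d$ acts through $T\colon y \mapsto \omega_d\, y$, and $H^0(C,\Omega^1_C)$ splits into eigenspaces $V_j$ for the characters $T^* = \omega_d^{-j}$ (or $\omega_d^j$, depending on normalization of which I would fix once and for all). For each $j$, the $j$-eigenspace is identified with a space of sections of a line bundle on $\PP^1$: concretely, differentials of the form $x^{r}(x-1)^{s}\, y^{-j}\, dx$, and the condition that such a form be holomorphic on $C$ (including at the three ramification points $0,1,\infty$ and at the points of $C$ above them after normalization) translates into a system of inequalities on $r,s$ governed by the fractional parts $\{aj/d\}$, $\{bj/d\}$, $\{cj/d\}$. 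Counting the integer lattice points $(r,s)$ satisfying these inequalities gives a dimension which, because three ramification points on $\PP^1$ leave no moduli, is a single floor expression; collecting terms yields $\dim V_j = -\bigl([-aj/d] + [-bj/d] + [(a+b)j/d] + 1\bigr)$, and one checks directly from the elementary identity $\{aj/d\} + \{bj/d\} + \{cj/d\} \in \{1,2\}$ (using $a+b+c \equiv 0 \bmod d$ and $j \not\equiv 0$) that this quantity is always $0$ or $1$. This proves part 1.

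For part 2, I would assemble the characteristic polynomial of $T_*$ on $H_1(C,\CC)$ from the eigenvalue data. The $j$-th eigenspace of $T_*$ on $H^1(C,\CC)$ has dimension $\dim V_j + \dim V_{d-j}$ (Hodge symmetry, the eigenvalue $\omega_d^j$ on holomorphic forms pairing with $\omega_d^{-j}=\omega_d^{d-j}$ on antiholomorphic forms), so $\omega_d^j$ occurs in $\Delta(t)$ with multiplicity $\dim V_j + \dim V_{d-j}$. Using part 1, this multiplicity is $1$ precisely when $j \not\equiv 0 \bmod d$ and none of $aj, bj, cj$ is $\equiv 0 \bmod d$, i.e. when $\omega_d^j$ is not a root of unity of order dividing $d/\gcd(a,d)$, $d/\gcd(b,d)$, or $d/\gcd(c,d)$, and it is $0$ otherwise (the eigenvalue $1$, $j=0$, needs separate bookkeeping: $T_*$ fixes the $1$-dimensional... actually since $C/\mu_d = \PP^1$ has no holomorphic forms, $1$ does not occur, consistent with the formula after cancelling the $(t-1)$ factors). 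Encoding ``$\omega_d^j$ appears iff its order divides $d$ but divides none of those three proper divisors'' as a ratio of $t^m-1$ factors via inclusion–exclusion on the lattice of divisors — and using $\gcd(a,b,c,d)=1$ so that the three conditions ``order divides $d/\gcd(a,d)$'' etc.\ have no common solution other than order $1$ — gives exactly
\[
\Delta(t) = \frac{(t^d-1)(t-1)^2}{(t^{\gcd(a,d)}-1)(t^{\gcd(b,d)}-1)(t^{\gcd(c,d)}-1)},
\]
the numerator $(t-1)^2$ arising because each of the three denominator factors contributes a spurious $(t-1)$ while $(t^d-1)$ contributes one, for a net correction of $(t-1)^{-3}\cdot(t-1) = (t-1)^{-2}$ relative to what a naive count gives — I would verify the power of $(t-1)$ by a direct genus check, namely that $\deg\Delta = 2g(C)$ with $g(C)$ computed from Riemann–Hurwitz applied to $C \to \PP^1$ ramified over $0,1,\infty$.

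The main obstacle I expect is the careful normalization at infinity and at the points of the normalization lying over $x=0,1,\infty$: the plane model $y^d = x^a(x-1)^b$ is typically singular, so the holomorphy condition for $x^r(x-1)^s y^{-j}\,dx$ must be checked on a resolution/normalization, and getting the floor-function bookkeeping exactly right there (rather than off by one at each of the three special fibers) is the delicate point. I would handle this by passing to a local uniformizer at each branch point — the local equation is $y^d \sim x^e$ with $e = a, b$, or $c$ — and computing the order of vanishing of the candidate differential in that uniformizer, which reduces the whole count to the three independent local contributions plus the ``degree on $\PP^1$'' term. The reference to \cite{weil1} (Weil's Jacobi-sum paper, treating the Fermat case $a=b=c$, $d=3$-type situations) and the Fermat-curve comparison already invoked in the proof of Lemma \ref{belyijacobian} provide a consistency check on the final formula.
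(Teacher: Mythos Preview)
Your approach is correct and leads to the same formula, but it is genuinely different from the paper's. The paper works entirely on the \emph{singular plane model} (\ref{belyieq}): it identifies $H^0(\Omega^1_C)$ with the space of adjoint curves of degree $d-3$ via the residue representation $P(x,y)\,dx/y^{d-1}$, invokes the Merle--Teissier description of the adjoint ideal of each local singularity $x^l+y^d$, and then has to check separately (by comparing the count against the Riemann--Hurwitz genus) that the adjunction conditions imposed at the three singular points are \emph{independent}. Only after that does the eigenvalue count on monomials $x^iy^{j}$ go through. You instead work directly on the normalization with the eigensheaf decomposition $\pi_*\Omega^1_C=\bigoplus L_j$, testing holomorphy of $x^r(x-1)^s y^{-j}\,dx$ by passing to local uniformizers at the three branch points; this bypasses both the adjoint-ideal input and the independence verification, at the cost of the local order-of-vanishing bookkeeping you flag. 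For part 2 the paper says only that the formula ``can be derived using the additivity of the zeta function'', whereas your argument via Hodge symmetry $\dim V_j+\dim V_{d-j}$ together with the observation that ``bad for two of $a,b,c$'' forces ``bad for all three'' (since $a+b+c=d$), hence under $\gcd(a,b,c,d)=1$ the only common bad index is $j=0$, gives a more explicit inclusion--exclusion derivation of the $(t-1)^2$ correction. One small slip: the roots of $t^{\gcd(a,d)}-1$ are the roots of unity of order dividing $\gcd(a,d)$, not $d/\gcd(a,d)$; equivalently, $aj\equiv 0\pmod d$ means $\omega_d^j$ has order dividing $\gcd(a,d)$. With that corrected, your inclusion--exclusion and genus check go through exactly as you outline.
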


\begin{proof}(of lemma \ref{eigenmultiplicity}) First note that 
the indices of stabilizers for the branching points of the 
cover (\ref{belyieq}) as the subgroups of 
 the covering group are $gcd(a,d),gcd(b,d),
gcd(c,d)$ respectively. Hence Riemann-Hurwitz formula yields that the genus 
of $C$ is given by (cf. \cite{kalel})
\begin{equation}\label{genus}
g={{d-gcd(a,d)-gcd(b,d)-gcd(c,d)+2} \over 2}.
\end{equation}
We shall represent explicitly the cohomology classes of $H^0(\Omega^1_C)$
and calculate the action of covering group on holomorphic 1-forms. 
Recall that the space of holomorphic 1-forms on a plane curve of degree $d$ 
can be identified with the space 
of adjoint curves of degree $d-3$, i.e. the curves of degree $d-3$ 
which equations at each singular point satisfy the adjunction 
conditions or equivalently belong to the adjoint 
ideal of this singularity. This can be made explicit
since any holomorphic 1-form can be written as the residue 
of 2-form on its complement, i.e. 
as
\begin{equation}\label{adjoint}
           {{ P(x,y)dx} \over {y^{d-1}}} \ \ \ \deg P \le d-3       
\end{equation}
The curve (\ref{belyieq}) may have singular 
points only at $(0,1),(1,0),(1,1)$ and near each the local equation 
is equivalent to $x^l+y^d=0$ (by abuse of language 
we shall refer to these points
as ``singular'' even if the curve is smooth there). 
To calculate the number of adjunction 
conditions we shall use the following (cf. \cite{merle}):
\begin{prop}
The conditions of adjunction for the singularity $y^d+x^l$ are 
the vanishing of the coefficients of monomials $x^ij^j$ such that 
$(i+1,j+1)$ is below or on the diagonal of the rectangle
with vertices $(0,0),(0,d),(l,0),(l,d)$. The number 
of adjunction conditions for singularity $y^d+x^l$ is equal to 
\begin{equation}
{{(d-1)(l-1)+gcd(d,l)-1} \over 2}
\end{equation}
\end{prop}
This implies that  the dimension of the space of curves of degree $d-3$ 
satisfying the conditions of adjunction at all three singularities
is greater or equal:
$${{(d-1)(d-2)} \over 2} -{{(d-1)(a-1)+gcd(d,a)-1} \over 2}-
{{(d-1)(b-1)+gcd(d,b)-1} \over 2}$$ 
$$-{{(d-1)(c-1)+gcd(d,c)-1} \over 2}=
{{d+2-gcd(a,d)-gcd(b,d)-gcd(c,d)}\over 2}.
$$
Comparison of this with the genus formula (\ref{genus})
shows that the conditions of adjunction imposed by three singular points 
are independent, i.e. 
one has the exact sequence
$$0 \rightarrow H^0(\Omega^1_{\tilde C}) \rightarrow 
H^0(\PP^2,\Omega^2_{\PP^2}(d)) \rightarrow \oplus_{s \in Sing C} M_s 
\rightarrow 0$$
where $\tilde C$ is the normalization of $C$ and $M_s$ is the quotient 
of the local ring of singular point by the adjoint ideal.

To calculate the action of $T_*$ on $H^1(\tilde C,\CC)$, we shall use 
the identification (\ref{adjoint})
of adjoints with the forms and that 
the action of $T^*$ on the monomials is given by 
$g(x^iy^j)=\omega_d^jx^iy^j$.  
Also note that the cardinality of the set 
of solutions to linear inequality
(for a fixed $j$) is given as follows:
\begin{equation}
Card \{ i \vert 0 < i, \ \ di+aj \le da\}=a+[-{{aj}\over d}]
\end{equation}

The multiplicity of the eigenvalue corresponding to the 
monomial $x^iy^{j-1}$
(i.e. $\omega_d^{j-1}$) in representation of $\mu_d$ in 
$H^0(\PP^2,\Omega^2_{\PP^2}(d))=H^0(\Omega^1_{\bar C})$ where $\bar C$ 
is a smoothing of $C$ is ${\rm Card} \{ i \vert 0 < i, i+j-1 \le d-3\}
=d-1-j$.
Hence the multiplicity of the eigenvalue $\omega^j$ is equal to 
\begin{equation} d-1-j-a-[-{{aj}\over d}]
-b-[-{{bj}\over d}]-c-[-{{cj}\over d}]=
\end{equation}
$$
=
-([-{{aj}\over d}]+[-{{bj}\over d}]+[{{(a+b)j} \over d}]+1).
$$
The last assertion of \ref{eigenmultiplicity}, i.e. that the multiplicity 
does not exceed 1, follows from the property $[x+y] \le [x]+[y]+1$. 

The formula for the characteristic polynomial can be derived 
using the additivity 
of zeta function similarly to the expression for the euler characteristic
obtained earlier
\end{proof}
To finish a proof of lemma \ref{belyijacobian}, just note that 
absence of multiple eigenvalues implies 
that the Jacobian must have a CM type (as in the proof of theorem
\ref{cmsing}).




\end{document}